\numberwithin{equation}{section}
\theoremstyle{plain}
\newtheorem{Theorem}{Theorem}
\newtheorem{Corollary}[Theorem]{Corollary}
\newtheorem{Proposition}[Theorem]{Proposition}
\newtheorem{Lemma}[Theorem]{Lemma}
\theoremstyle{definition}
\newtheorem{Conditions}[Theorem]{Conditions}
\newtheorem{example}[Theorem]{Example}
\newcommand{\Cat}{\ensuremath{\textnormal{Cat}}\xspace}
\newcommand{\Set}{\ensuremath{\textnormal{Set}}\xspace}
\newcommand{\SSet}{\ensuremath{\textnormal{SSet}}\xspace}
\newcommand{\f}[1]{\ensuremath{\mathcal{#1}}\xspace}
\newcommand{\g}[1]{\ensuremath{\mathbb{#1}}\xspace}
\newcommand{\TAlg}{\ensuremath{\textnormal{T-Alg}}\xspace}
\newcommand{\TAlgs}{\ensuremath{\textnormal{T-Alg}_{\textnormal{s}}}\xspace}
\newcommand{\RH}{\ensuremath{\textnormal{RH}}\xspace}
\newcommand{\Ord}{Ord}
\newcommand{\C}{{\mathcal C}}
\def\matrixobject@{%
 \edef \next@{={\DirectionfromtheDirection@ }}%
 \expandafter \toks@ \next@ \plainxy@
 \let\xy@@ix@=\xyq@@toksix@
 \xyFN@ \OBJECT@}
\let\xy@entry@@norm=\entry@@norm
\def\entry@@norm@patched{%
 \let\object@=\matrixobject@
 \xy@entry@@norm }
\newcommand{\twocong}[2][0.5]{\ar@{}[#2] \save ?(#1)*{\cong}\restore}
\newcommand{\twoeq}[2][0.5]{\ar@{}[#2] \save ?(#1)*{=}\restore}
\newcommand{\rtwocell}[3][0.5]{\ar@{}[#2] \ar@{=>}?(#1)+/l 0.2cm/;?(#1)+/r 0.2cm/^{#3}}
\newcommand{\ltwocell}[3][0.5]{\ar@{}[#2] \ar@{=>}?(#1)+/r 0.2cm/;?(#1)+/l 0.2cm/^{#3}}
\newcommand{\ltwocello}[3][0.5]{\ar@{}[#2] \ar@{=>}?(#1)+/r 0.2cm/;?(#1)+/l 0.2cm/_{#3}}
\newcommand{\dtwocell}[3][0.5]{\ar@{}[#2] \ar@{=>}?(#1)+/u  0.2cm/;?(#1)+/d 0.2cm/^{#3}}
\newcommand{\dltwocell}[3][0.5]{\ar@{}[#2] \ar@{=>}?(#1)+/ur  0.2cm/;?(#1)+/dl 0.2cm/^{#3}}
\newcommand{\drtwocell}[3][0.5]{\ar@{}[#2] \ar@{=>}?(#1)+/ul  0.2cm/;?(#1)+/dr 0.2cm/^{#3}}
\newcommand{\dthreecell}[3][0.5]{\ar@{}[#2] \ar@3{->}?(#1)+/u  0.2cm/;?(#1)+/d 0.2cm/^{#3}}
\newcommand{\utwocell}[3][0.5]{\ar@{}[#2] \ar@{=>}?(#1)+/d 0.2cm/;?(#1)+/u 0.2cm/_{#3}}
\newcommand{\dtwocelltarg}[3][0.5]{\ar@{}#2 \ar@{=>}?(#1)+/u  0.2cm/;?(#1)+/d 0.2cm/^{#3}}
\newcommand{\utwocelltarg}[3][0.5]{\ar@{}#2 \ar@{=>}?(#1)+/d  0.2cm/;?(#1)+/u 0.2cm/_{#3}}
\newcommand{\cd}[2][]{\vcenter{\hbox{\xymatrix#1{#2}}}}
\newcommand{\atwo}{\textbf{2}\xspace}
\newcommand{\Arr}{\textnormal{Arr}}
\newcommand{\Alg}{\ensuremath{\textnormal{Alg}}\xspace}
\newcommand{\Ainj}{\ensuremath{{\bbi}\textnormal{nj}\xspace}}
\newcommand{\AFib}{\ensuremath{\textnormal{AlgFib}}}
\newcommand{\Inj}{\textnormal{Inj}\xspace}
\newcommand{\ca}{\ensuremath{\mathcal A}\xspace}
\newcommand{\cb}{\ensuremath{\mathcal B}\xspace}
\newcommand{\cc}{\ensuremath{\mathcal C}\xspace}
\newcommand{\ce}{\ensuremath{\mathcal E}\xspace}
\newcommand{\ck}{\ensuremath{\mathcal K}\xspace}
\newcommand{\cm}{\ensuremath{\mathcal M}\xspace}
\newcommand{\cs}{\ensuremath{\mathcal S}\xspace}
\newcommand{\SE}{{\mathbb{S}\textnormal{E}}\xspace}
\newcommand{\bbi}{\ensuremath{\mathbb I}\xspace}
\newcommand{\WE}{\mathcal{W}}
\newcommand{\Fib}{\mathcal{F}}
\newcommand{\rlp}[1]{{#1}^\boxslash}
\newcommand{\llp}[1]{{}^\boxslash\!{#1}}
\def\black{\color{black}}
\title{Equipping weak equivalences with algebraic structure}
\author{John Bourke}
\address{Department of Mathematics and Statistics, Masaryk University, Kotl\'a\v rsk\'a 2, Brno 61137, Czech Republic}
\email{bourkej@math.muni.cz}
\begin{document}
\date{\today}

\subjclass[2000]{Primary: 55U35; Secondary 18C35}
\keywords{monads, algebraic injectives, weak equivalences}

 
 \leftmargini=2em

\def\xypic{\hbox{\rm\Xy-pic}}
\maketitle

\begin{abstract}
We investigate the extent to which the weak equivalences in a model category can be equipped with algebraic structure.  We prove, for instance, that there exists a monad $T$ such that a morphism of topological spaces admits $T$-algebra structure if and only it is a weak homotopy equivalence.  Likewise for quasi-isomorphisms and many other examples.  The basic trick is to consider injectivity in arrow categories.  Using algebraic injectivity and cone injectivity we obtain general results about the extent to which the weak equivalences in a combinatorial model category can be equipped with algebraic structure.
\end{abstract}

\section{Introduction}

The notion of a Quillen model category \cite{Quillen1967Homotopical} plays a central role in modern homotopy theory.  It involves a pair of weak factorisation systems --- (trivial cofibration/fibration) and (cofibration/trivial fibration) --- and a class of morphisms called \emph{weak equivalences}.  In practice, most interesting model structures are \emph{cofibrantly generated}; that is, there exists a set $I \subseteq \Arr(\C)$ of trivial cofibrations such that $f:X \to Y$ is a fibration just when it has the right lifting property depicted below
\begin{equation*}
\cd{
A \ar[d]_{\alpha \in I} \ar[rr]^{r} && X \ar[d]^{f}\\
B \ar@{.>}[urr]^{\exists} \ar[rr]_{s} && Y}
\end{equation*}
and a set $J$ of cofibrations determining the trivial fibrations in the same way.  

The classical tool for constructing the weak factorisation systems (wfs) is a transfinite construction known as \emph{Quillen's small object argument}.  In \cite{Garner2011Understanding} Garner described a refined version of this construction, with better categorical properties, that produces not just weak factorisation systems but so-called \emph{algebraic weak factorisation systems} (awfs).  Notably the factorisations in such a system produce not merely fibrations but algebraic fibrations --- these are morphisms $f:X \to Y$ equipped with a lifting function $\phi$ providing solutions to each lifting problem as below
\begin{equation*}
\cd{
A \ar[d]_{\alpha \in I} \ar[rr]^{r} && X \ar[d]^{f}\\
B \ar@{.>}[urr]|{\phi(\alpha,r,s)} \ar[rr]_{s} && Y &.}
\end{equation*}
For instance, algebraising the usual Kan fibrations of simplicial sets leads to the \emph{algebraic Kan complexes} of \cite{Nikolaus2011Algebraic}, an algebraic model of $\omega$-groupoids.  The fact that algebraic fibrations come \emph{equipped} with liftings as opposed to having the property that certain liftings \emph{exist} gives one sense in which they are \emph{algebraic} --- another, more categorical formulation, is that they are the algebras for a monad on the category of arrows in $\C$.



Delving a little further into the theory one finds more powerful notions of cofibrant generation for awfs, in which the set $I$ is replaced by a category \cite{Garner2011Understanding} (or even a double category \cite{Bourke2016Accessible}) of morphisms.  These more highly structured notions encode, as algebraic fibrations, morphisms equipped with liftings satisfying \emph{coherence conditions}.  In 2-category theory, such coherence conditions enable us to capture lax structures such as Grothendieck fibrations and lax morphisms in two dimensional universal algebra \cite{Bourke2016Accessible, Bourke2016Weak} that lie far beyond the expressive power of wfs.  They also naturally arise in homotopy type theory as a means to construct good categorical models -- see, for instance, \cite{VG,Gambino2017The-Frobenius,Awodey2018ACubical}.



In \cite{Hess2017A-necessary} Emily Riehl and coauthors have introduced the notion of an \emph{accessible model category} --- a locally presentable model category whose two wfs underlie cofibrantly generated (aka accessible) awfs.  Since cofibrantly generated awfs are considerably more expressive than cofibrantly generated wfs, it follows that accessible model categories include all combinatorial model categories but form a broader class.  Examples of the former which are not combinatorial include the Hurewicz model structure on chain complexes \cite{Hess2017A-necessary} and the ``trivial" model structure on the arrow category of $\Cat$ \cite{Lack2007Homotopy-theoretic}.   

At CT2015 Riehl told me that she was hoping for a result that would do for accessible model categories what Smith's theorem \cite{Beke2000Sheafifiable} does for combinatorial ones.  Namely, Smith's theorem describes necessary and sufficient conditions on a set $I$ of generating cofibrations and class $W$ of weak equivalences to form part of a combinatorial model structure.  In the accessible setting, to capture examples such as the above two, one would need to allow at least a \emph{category} of generating cofibrations $I$.  The corresponding \emph{algebraic trivial fibrations} then come equipped with liftings satisfying compatibilities that make them more difficult to construct.  On contemplating this question, it quickly became clear to me that in order to construct algebraic trivial fibrations from morphisms which are both weak equivalences together and algebraic fibrations (whatever we take the latter to be) we would need to first understand the extent to which the weak equivalences \emph{can be made algebraic}.  

This brings us naturally to the question of what an \emph{algebraic model structure} ought to be.  The concept was first defined by Riehl \cite{Riehl2011Algebraic} as a model structure whose wfs underlie a pair of (inter-related) awfs.  Notably while the weak factorisation systems were algebraised, the class of weak equivalences was not.  In a development concurrent with our own and having motivations coming from homotopy type theory, Andrew Swan \cite{Swan2018Identity} has recently defined a stronger notion in which the class of weak equivalences is replaced by a category of structured equivalences.  
At this point in time, it is not yet clear whether a stable axiomatisation of the concept has been reached, and further examples are needed. But certainly the idea of strengthening Riehl's definition, by enhancing the class of weak equivalences to a category thereof, fits neatly with our own point of view.  What Swan's definition does not impose any conditions on is the sense in which these structured equivalences are algebraic.

%

The goal of the present paper is to investigate precisely this question in the classical setting of cofibrantly generated model categories: namely, to what extent can weak equivalences be made algebraic?  What does this mean?  Taking inspiration from the (trivial) fibrations in a cofibrantly generated model category we observe that they can be made algebraic because they can be captured using lifting properties.  However since classes of morphism defined by left/right lifting properties are stable under pushout/pullback respectively, and since interesting classes of weak equivalences are not stable under either construction, they cannot be defined using classical lifting properties.

Nonetheless it turns out that many kinds of weak equivalence can be characterised using lifting properties with respect to \emph{squares}.  Indeed, Jeff Smith, in a passing remark to the authors of \cite{Dugger2004Weak}, pointed out that a morphism $f:X \to Y$ of topological spaces is a weak homotopy equivalence just when the solid part of each commutative diagram as depicted below 
\begin{equation}\label{eq:arrowcat}
\cd{
S^{n-1} \ar[d]_{j_n} \ar@/^1.75pc/[rrrr]^{r} \ar[rr]^{j_n} && D^n \ar[d]^{\rho_n} \ar@{.>}[rr]^{\exists} && X \ar[d]^{f} \\
D^{n} \ar@/_1.75pc/[rrrr]_{s} \ar[rr]_{\tau_n} && D^{n+1} \ar@{.>}[rr]_{\exists} && Y
}
\end{equation}
can be extended to a commutative diagram as indicated.  (See Section~\ref{section:Examples} for more on this example and others like it.)  Surprisingly, seemingly nowhere has it been pointed out that these slightly odd looking lifting properties are instances of the categorical concept of injectivity --- namely, \emph{injectivity in the category of arrows}.  R\'{e}my Tuy\'{e}ras has noticed this independently.  

Taking injectivity in arrow categories as our starting point, in Section~\ref{section:Injectivity} we describe a plethora of classes of weak equivalences that naturally arise as injectives in arrow categories.  These include
\begin{itemize}
\item equivalences of categories;
\item weak homotopy equivalences of topological spaces;
\item quasi-isomorphisms of chain complexes;
\item weak equivalences of various kinds of (higher) categorical structures.
\end{itemize}
We then algebraise such weak equivalences as \emph{algebraic injectives}.  Doing so, it follows, perhaps surprising, that for each of the above classes
\begin{itemize}
\item there exists a monad $T$ on the arrow category such that a morphism bears $T$-algebra structure just when it is a weak equivalence.
\end{itemize}  
This is Corollary~\ref{cor:monad1} and provides a categorical sense in which such classes of weak equivalence can be made algebraic.  Our first main result, Theorem~\ref{thm:CombinatorialInj}, characterises those combinatorial model categories whose weak equivalences can be made algebraic, in either of the above senses, as those whose weak equivalences are stable under infinite products.  

Since weak equivalences of simplicial sets are not stable under infinite products they therefore cannot be captured using injectivity or monads.  Section~\ref{section:Cones} deals with the appropriate generalisations of these concepts --- cone injectivity and multimonads --- required to capture weak equivalences in a general combinatorial model category.  We introduce \emph{algebraic cone injectives} and show that, in a locally presentable category, the category of algebraic cone injectives is locally \emph{multipresentable} and \emph{multimonadic.}  This allows us to give our second main result, Theorem~\ref{thm:CombCone}, which describes the algebraic structure borne by weak equivalences in a general combinatorial model category.  Furthermore, using simplicial subdivision, we describe a concrete set of cones generating the weak equivalences of simplicial sets.

Finally, in Section~\ref{section:Nikolaus}, we sharpen a result of Nikolaus by showing that the category of algebraically fibrant objects in a combinatorial model category admits a transferred model structure Quillen equivalent to the original one --- in particular, this establishes the apparently new result that each combinatorial model category is equivalent to one in which \emph{all objects are fibrant}.  By passing to the category of algebraically fibrant objects this allows us to return from the non-standard world of cones and multimonads to the more familiar world of injectivity and monads. 

In future work we plan to build upon the results developed herein to obtain a version of Smith's theorem for accessible model categories.  At the same time, we plan to use Theorem~\ref{thm:CombCone} to obtain new techniques for constructing, and a deeper understanding of, algebraic model categories in the stronger sense of \cite{Swan2018Identity}.

\subsection*{Acknowledgements}

The author gratefully acknowledges the support of an Australian Research Council Discovery Grant DP160101519 and the support of the Grant Agency of the Czech Republic under the grant 19-00902S.  Particular thanks are due to Emily Riehl whose interest in an algebraic version of Smith's theorem got me thinking about this topic and to Luk\'{a}\v{s} Vok\v{r}\'{i}nek who helped me to see the connection between $Ex_{\infty}$ and the generating cones for simplicial sets.  Thanks also to the organisers of the PSSL101 in Leeds for providing the opportunity to present this work, and to the members of the Australian Category Seminar for listening to me speak about it.

\section{Injectivity in arrow categories and weak equivalences as algebras for a monad}\label{section:Injectivity}

\subsection{Injectivity in arrow categories}\label{sect:Idea}

Given morphisms $f:A \to B$ and $g:C \to D$ one says that $g$ has the \emph{right lifting property} (r.l.p.) with respect to $f$ if in each commutative square 
$$\cd{
A \ar[d]_{f} \ar[r]^{r} & C \ar[d]^{g}\\
B \ar@{.>}[ur]^{\exists} \ar[r]_{s} & D}$$
there exists a diagonal filler making both triangles commute.  We denote the relationship by $f \boxslash g$.  More generally if $J$ and $K$ are classes of morphisms we say that $J \boxslash K$ if $j \boxslash k$ for each $j \in J$ and $k \in K$.  We define $\rlp{J}  = \{f: J \boxslash f\}$ and $\llp{J}=\{f:f \boxslash J\}$.

Many properties of morphisms are lifting properties.  For instance, a functor $f:A \to B$ has the right lifting property with respect to 
\begin{enumerate}
\item $\varnothing \to \{\bullet\}$ just when it is surjective on objects;
\item $\{0 \hspace{0.5cm} 1\} \to \{0 \to 1\}$ just when it is full;
\item $\{0 \rightrightarrows 1\} \to \{0 \to 1\}$ just when it is faithful.
\end{enumerate}
Accordingly the surjective on objects equivalences of categories are of the form $J^{\boxslash}$ where $J$ consists of the above three morphisms in $\Cat$.  More generally, in a cofibrantly generated model category both the classes of fibrations and trivial fibrations are of the form $J^{\boxslash}$ for a set of morphisms $J$.

What about equivalences of categories?  The issue here is that being \emph{essentially} surjective on objects is not a lifting property.  We can, however, capture it in a similar fashion.  Consider the generic isomorphism $\{0 \sim 1\}$ and the two functors $0,1:\{\bullet\} \rightrightarrows \{0 \sim 1\}$ named by the objects they select.  We obtain a commutative square as in the inside left below.
\begin{equation}\label{eq:eso}
\cd{
\varnothing \ar[d]_{!} \ar@/^1.75pc/[rrrr]^{!} \ar[rr]^{!} && \{\bullet\} \ar[d]^{0} \ar@{.>}[rr]^{\exists} && A \ar[d]^{f} \\
\{\bullet\} \ar@/_1.75pc/[rrrr]_{b} \ar[rr]^{1} && \{0 \sim 1\} \ar@{.>}[rr]^{\exists } && B
}
\end{equation}
A commutative square from the left vertical morphism to $f$ specifies an object $b \in B$.  Given such, dotted arrows rendering the diagram everywhere commutative amount to the choice of an object $a \in A$ and an isomorphism $fa \sim b$.  Accordingly such dotted arrows provide witnesses to the essential surjectivity of $f$.

To gain a better grasp of the above condition we take a step backwards.   Given a morphism $f:A \to B$ and object $C \in \f C$ we say that $C$ is injective to $f$ 
$$\cd{
A \ar[d]_{f} \ar[r]^{r} & C \\
B \ar@{.>}[ur]_{\exists}}$$
if each $r:A \to C$ can be extended along $f$ as depicted.  We write $f \perp C$ and $J \perp C$ for the evident extension of this concept to deal with a class of morphisms $J$ and define $\Inj(J)=\{C:J \perp C\}$.

The relevant concept here is that of injectivity in the arrow category $\Arr(\cc)$.  Objects of $\Arr(\cc)$ are morphisms in $\cc$ whilst a morphism $(r,s):f \to g \in \Arr(\cc)$ is a commutative square as on the inside left below.  To say that $(r,s) \perp h$ is then to say that given the solid part of a diagram as below
\begin{equation}\label{eq:arrowcat}
\cd{
A \ar[d]_{f} \ar@/^1.75pc/[rrrr]^{t} \ar[rr]^{r} && C \ar[d]^{g} \ar@{.>}[rr]^{\exists v} && E \ar[d]^{h} \\
B \ar@/_1.75pc/[rrrr]_{u} \ar[rr]_{s} && D \ar@{.>}[rr]_{\exists w} && F
}
\end{equation}
there exist morphisms $v$ and $w$ as depicted, such that the right square commutes and such that $v \circ r = t$ and $w \circ s = u$.

Since \eqref{eq:eso} is an instance of \eqref{eq:arrowcat} we conclude that essentially surjective on objects functors are injectives in $\Arr(\Cat)$.  Now if the ambient category $\f C$ admits a terminal object $1$ then $f \perp C$ just when $!_{C}:C \to 1$ has the r.l.p. with respect to $f$.  Therefore injectivity is, ordinarily, a special case of having the right lifting property.  But, in fact, having the right lifting property is \emph{always} a special case of injectivity --- once we pass to the arrow category.  To see this observe that given $f:A \to B$ in $\cc$ we obtain the morphism $(f,1_{B}):f \to 1_{B}$ in $\Arr(\cc)$ as below.
$$
\cd{A \ar[d]_{f} \ar[r]^{f} & B \ar[d]^{1_{B}} \\
B \ar[r]_{1_{B}} & B \rlap{ .}
}$$
in $\Arr(\cc)$.  A moment's thought establishes the following result.
\begin{Lemma}
Consider morphisms $f:A \to B$ and $g:C \to D$ in $\f C$.  Then $f \boxslash g$ if and only if $(f,1_{B}) \perp g$ in $\Arr(\cc)$. 
\end{Lemma}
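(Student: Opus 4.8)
The plan is to unwind the definition of injectivity in $\Arr(\cc)$ for the specific morphism $(f,1_{B})$ and observe that it reduces verbatim to the right lifting property, so that no more than the promised ``moment's thought'' is required. First I would recall that the domain of $(f,1_{B}):f \to 1_{B}$ is the object $f$ of $\Arr(\cc)$ and its codomain is the identity $1_{B}$ regarded as an object. A morphism $(r,s):f \to g$ in $\Arr(\cc)$ is precisely a commutative square $gr = sf$; that is, it is exactly the solid data of a lifting problem for $f \boxslash g$.

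Next I would describe the shape of a candidate extension. By definition, $(f,1_{B}) \perp g$ asks that every such $(r,s)$ extend along $(f,1_{B})$ to a morphism $1_{B} \to g$ in $\Arr(\cc)$. Since the domain object here is the identity $1_{B}$, a morphism $1_{B} \to g$ consists of a pair $(v,w)$ with $v:B \to C$ and $w:B \to D$ subject to the single constraint $gv = w$ coming from commutativity of its defining square, so it is determined by $v$ alone. Composing $(v,w)$ with $(f,1_{B})$ in $\Arr(\cc)$ yields the morphism $f \to g$ with top component $v \circ f$ and bottom component $w \circ 1_{B} = w$.

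The key step is then to impose the extension condition $(v,w) \circ (f,1_{B}) = (r,s)$. Comparing components gives $vf = r$ and $w = s$, and together with $gv = w$ this forces $gv = s$. Hence an extension is exactly a morphism $v:B \to C$ satisfying $vf = r$ and $gv = s$, which is precisely a diagonal filler making both triangles of the original lifting problem commute. Conversely, any such filler $v$ determines the extension $(v,gv)$ restricting to $(r,s)$, so the two conditions coincide and the equivalence follows.

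I expect no genuine obstacle: the whole content is definitional bookkeeping. The only point demanding a little care is to track that the codomain of $(f,1_{B})$ is the identity $1_{B}$, so that a morphism out of it is governed by the single equation $gv = w$, with $w$ then pinned to $s$ by the extension requirement. No size or presentability hypotheses enter, and the argument is entirely self-contained.
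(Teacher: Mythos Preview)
Your proposal is correct and is precisely the ``moment's thought'' the paper invites: the paper gives no proof beyond that remark, and your unwinding of the injectivity condition for $(f,1_{B})$ in $\Arr(\cc)$ into the diagonal-filler condition is exactly what is intended.
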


Putting the above together we conclude that equivalences of categories are the injectives with respect to the three morphisms in $\Arr(\Cat)$ below.
\begin{equation}\label{eq:CatInjectives}
\cd{
\varnothing \ar[d]_{!} \ar[r]^-{!} & \{\bullet\} \ar[d]^{0} & \{0 \hspace{0.5cm} 1\}\ar[d] \ar[r] & \{0 \to 1\}\ar[d] & \{0 \rightrightarrows 1\}\ar[d] \ar[r] & \{0 \to 1\}\ar[d] \\
\{\bullet\} \ar[r]^-{1} & \{0 \sim 1\} & \{0 \to 1\} \ar[r] & \{0 \to 1\} & \{0 \to 1\} \ar[r] & \{0 \to 1\}
}
\end{equation}
We observe in passing that each is a morphism from a (generating) cofibration to a trivial cofibration in the categorical model structure on $\Cat$ --- a pattern that repeats itself in many of the examples detailed below.

\subsection{Examples}\label{section:Examples}
Here are further classes of model categories whose weak equivalences can be described as injectives in the arrow category.

\begin{example}[Topological spaces]\label{example:topological}
Let $D^n$ denote the unit disk in $\g R^{n}$ and $$\rho_{n},\tau_{n}:D^n \rightrightarrows D^{n+1}$$ denote the inclusions to the north and south hemisphere.  Let $S^{n-1}$ denote the unit sphere in $\g R^{n}$ and $j_{n}:S^{n-1} \to D^{n}$ be the inclusion of the boundary.  Note we take $D^{0}=1$ and $S^{-1}=\varnothing$.  One then has a commuting square
$$
\cd{S^{n-1} \ar[d]_{j_{n}} \ar[r]^{j_{n}} & D^{n} \ar[d]^{\rho_{n}} \\
D^{n} \ar[r]_{\tau_{n}} & D^{n+1}}
$$
for each $n \in \g N$.  As noted in the introduction to \cite{Dugger2004Weak} a continuous map $f:X \to Y$ is a weak homotopy equivalence of topological spaces just when it is injective to the above squares.

Moreover, we note that the left and right vertical maps in the square form the generating cofibrations and trivial cofibrations for the classical model structure on $Top$ whose weak equivalences are the weak homotopy equivalences \cite{Quillen1967Homotopical}.  In particular, the above squares concisely encode the standard model category structure on topological spaces.
\end{example}

\begin{example}[Chain complexes]
Let $Ch_{R}$ denote the category of unbounded chain complexes over a commutative ring.  In a manner similar to the topological example above, we will describe the \emph{quasi-isomorphisms} as an injectivity class.  
\begin{Lemma}\label{thm:quasi}
A morphism $f:X \to Y$ is a quasi-isomorphism if and only if the following condition holds for each $n \in \g Z$.
\begin{itemize}
\item Given an $n$-cycle $a \in X_{n}$ and element $b \in Y_{n+1}$ with $db=fa$, there exists $c \in X_{n+1}$ such that $dc=a$ \textbf{and $e \in Y_{n+2}$ such that $de=fc-b$.}
\end{itemize}
\end{Lemma}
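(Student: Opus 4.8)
The plan is to prove both implications by translating the stated condition into statements about the homology groups $H_n$. Recall that $f$ is a quasi-isomorphism precisely when each induced map $H_n(f) : H_n(X) \to H_n(Y)$ is an isomorphism -- equivalently both injective and surjective -- for every $n \in \g Z$. The key observation organising the argument is that the \emph{existence of $c$} clause encodes injectivity of homology in degree $n$, whereas the \emph{existence of $e$} clause encodes surjectivity in degree $n+1$.

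First I would treat the implication that the condition implies $f$ is a quasi-isomorphism. For injectivity of $H_n(f)$, suppose $a \in X_n$ is a cycle with $[fa] = 0$ in $H_n(Y)$; then $fa = db$ for some $b \in Y_{n+1}$, and the hypothesis applied to this $a$ and $b$ delivers $c \in X_{n+1}$ with $dc = a$, so $[a] = 0$ and $H_n(f)$ is injective. For surjectivity of $H_{n+1}(f)$, I would feed the \emph{zero} cycle $a = 0 \in X_n$ into the condition: the requirement $db = fa$ then reads $db = 0$, so $b$ ranges over arbitrary cycles in $Y_{n+1}$, and the condition produces a cycle $c \in X_{n+1}$ (since $dc = 0$) together with $e$ satisfying $de = fc - b$. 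The latter says $fc - b$ is a boundary, i.e.\ $H_{n+1}(f)[c] = [b]$; as $[b]$ ranges over all of $H_{n+1}(Y)$ this yields surjectivity of $H_{n+1}(f)$. Since $n$ is arbitrary, both injectivity and surjectivity hold in every degree.

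For the converse, assume $f$ is a quasi-isomorphism and take a cycle $a \in X_n$ together with $b \in Y_{n+1}$ satisfying $db = fa$. Since $[fa] = [db] = 0$ and $H_n(f)$ is injective, $[a] = 0$, so there is some $c_0 \in X_{n+1}$ with $dc_0 = a$; this is a first candidate for $c$. A short computation, using that $f$ is a chain map, shows $fc_0 - b$ is a cycle in $Y_{n+1}$, but it need not yet be a boundary, so $c_0$ may fail the $e$ clause. The correcting step is to replace $c_0$ by $c = c_0 + w$ for a suitable cycle $w \in X_{n+1}$: this leaves $dc = a$ unchanged while altering the class of $fc - b$ by $H_{n+1}(f)[w]$. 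Using surjectivity of $H_{n+1}(f)$ I can choose $[w]$ so that $[fc - b] = 0$, and then $fc - b = de$ for some $e \in Y_{n+2}$, as required.

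The routine points are the chain-level verifications, for instance that $fc_0 - b$ is a cycle. The one genuinely delicate step, and the place I expect to spend the most care, is this adjustment of $c$ by a cycle in the converse direction: recognising that the $e$ clause is not automatic for an arbitrary lift $c_0$ of $a$, and that surjectivity of $H_{n+1}(f)$ is exactly what clears the class $[fc - b]$. This is also the conceptual clue to the whole statement, since it exposes the matching between the two clauses of the condition and the injectivity and surjectivity of homology in adjacent degrees.
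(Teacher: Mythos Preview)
Your proposal is correct and follows essentially the same approach as the paper: both directions hinge on the observation that the $c$-clause encodes injectivity of $H_n(f)$, the $e$-clause (via the trick $a=0$) encodes surjectivity of $H_{n+1}(f)$, and in the converse one first lifts $a$ to some $c_0$ and then corrects by a cycle chosen using surjectivity. The only cosmetic difference is that the paper subtracts the correcting cycle (writing the new lift as $c-e'$) while you add it (writing $c_0+w$), which is immaterial.
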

\begin{proof}
To say that $H_{n}f:H_{n}X \to H_{n}Y$ is monic is clearly to say exactly that the part of the above condition that is not in bold holds.

Suppose the full condition holds --- we must show that $H_{n}f$ is surjective.  For this let $u \in Y_{n}$ be an $n$-cycle.  Take $a=0$ and $b=u$; then $db = 0 = a$ so there exists $c \in X_{n}$ satisfying $dc=0$ and $e \in Y_{n+1}$ such that $de=fc-u$. Hence $[u]=[fc] \in H_{n}Y$ as required.

Conversely suppose that $H_nf$ is a quasi-isomorphism.  Let $a \in X_{n}$ and $b \in Y_{n+1}$ be as above.  As $H_{n}f$ is monic there exists $c \in X_{n+1}$ such that $dc=a$.  Now $fc-b \in Y_{n+1}$ is a cycle since $d(fc-b)=fdc-db=fa-fa=0$.  So by surjectivity there exists a cycle $e \in X_{n+1}$ and $h \in Y_{n+2}$ such that $dh=(fc-b)-fe$.  Since $dh=f(c-e)-b$ and $d(c-e)=dc-de=a-0=a$ the pair $(c-e,h)$ verify the full condition.
\end{proof}
We use the standard topological names for the following chain complexes. 
$$(S^n)_k = \begin{cases} R & k = n \\ 0 & k \neq n \end{cases} \quad (D^n)_k = \begin{cases} R & k = n,n-1 \\ 0 & \mathrm{else} \end{cases} \quad (I^{n})_k = \begin{cases} R & k=n+1 \\ R \oplus R & k = n \\ 0 & \mathrm{else} \end{cases}$$

The non-trivial differential in $D^n$ is the identity; the non-trivial differential in $I^{n}$ is $x \mapsto (x,-x)$.  There is an evident inclusion $j_n:S^{n-1} \to D^{n}$ and a pair of inclusions $i,j:D^{n} \rightrightarrows I^{n}$ which act as the coproduct inclusions $R \rightrightarrows R \oplus R$ in degree $n$ and as zero otherwise.  We obtain a commutative square 
$$
\cd{ S^{n-1} \ar[d]_{j_{n}} \ar[r]^{j_{n}} & D^{n} \ar[d]^{\rho_n} \\
D^{n} \ar[r]_{\tau_n} & I^{n}}$$
against which $f:X \to Y$ is injective for each $n$ exactly when it verifies the criterion for a quasi-isomorphism given in Lemma~\ref{thm:quasi}.

The connection between the above squares and those of Example~\ref{example:topological} can be sharpened
by noting the following connection between their bottom right corners --- namely that, in the topological setting $D^{n+1} \cong D^{n} \times I$ for $I$ the unit interval, whilst in the setting of chain complexes $I^n \cong  D^n \otimes I$ for $I = I^1$ defined as above.
\end{example}

\begin{example}[Categories with structure]
Categories equipped with structure --- such as monoidal categories --- can typically be understood as the algebras for an accessible 2-monad $T$ on $\Cat$.  For such a $T$ the category $\TAlgs$ of algebras and strict morphisms admits a model structure \cite{Lack2007Homotopy-theoretic} in which $f:A \to B$ is a weak equivalence just when its image under the forgetful functor $U:\TAlgs \to \Cat$ is an equivalence of categories.  Since we have an adjunction $F \dashv U$ it follows that an algebra map $f:A \to B$ is a weak equivalence just when it is injective with respect to the image of the three squares \eqref{eq:CatInjectives} under $\Arr(F):\Arr(\Cat) \to \Arr(\TAlgs)$.
\end{example}

\begin{example}[2-categories, bicategories and Gray-categories]
There are model structures on the categories of 2-categories \cite{Lack2002A-quillen}, of bicategories \cite{Lack2004A-quillen}, and of Gray-categories \cite{Lack2011A-quillen} established by Lack.  In each case the weak equivalences --- biequivalences in the first two cases and the triequivalences in the third --- can be described using injectivity conditons, much as for equivalences of categories.
\end{example}

\begin{example}[Globular $\omega$-groupoids and $\omega$-categories]
There are various ways of describing the weak equivalences of strict $\omega$-groupoids.  Firstly there is what we might dub the topological definition which is given in terms of homotopy groups.  There is also a categorical definition which is as follows.  A morphism $f:X \to Y$ is a weak equivalence if
\begin{enumerate}
\item given $y \in Y(0)$ there exists $x \in X(0)$ and a 1-cell $\alpha:Fx \cong y$, and
\item for $n \geq 0$ if $x,y \in X(n)$ are parallel $n$-cells then given $\alpha:fx \to fy \in Y(n+1)$ there exists $\beta:x \to y \in X(n+1)$ and $\rho:F\beta \cong \alpha \in Y(n+2)$.
\end{enumerate}
(Note that here our convention is that all $0$-cells are parallel.)  The content of Proposition 1.7(iv) of \cite{Ara2011The-Brown} is that the categorical and topological definitions coincide.

For all $n$ let $D^{n}=\mathbb G(-,n) \in [\mathbb G^{op},\Set]$ be the $n$-globe and $j_{n}:S^{n-1} \hookrightarrow D^{n}$ be the globular set obtained by omitting the unique cell of dimension $n$.  We have a commutative square
$$
\cd{S^{n-1} \ar[d]_{j_{n}} \ar[r]^{j_{n}} & D^{n} \ar[d]^{\rho_{n}} \\
D^{n} \ar[r]_{\tau_{n}} & D^{n+1}}
$$
where $\rho_{n}$ and $\tau_{n}$ are the evident inclusions.  Let $U:\omega\textnormal{-Gpd} \to [\mathbb G^{op},\Set]$ denote the forgetful functor and $F$ its left adjoint.
Then $f:X \to Y$ is a weak equivalence just when $(j_{n},\tau_n) \perp Uf$ or equivalently $(Fj_{n},F\tau_n) \perp f$ for all $n$.  This time the sets $\{Fj_{n}:n \in \g N\}$ and $\{F\rho_{n}:n \in \g N\}$ are the generating cofibrations and trivial cofibrations for the Brown-Gola\'{n}ski model structure on strict $\omega$-groupoids.

An essentially identical injectivity characterisation is possible for weak equivalences of Grothendieck weak $\omega$-groupoids.  This follows from Theorem 4.18(iv) of \cite{Ara2013On-the}.  It is expected, though not yet proven, that these are the weak equivalences of a model structure.  A slightly more complex injectivity characterisation can be given for weak equivalences of strict $\omega$-categories.  This is the content of Proposition 4.37 of \cite{Lafont2010A-folk}.
\end{example}

\begin{example}[{Pure monomorphisms}]
Here is an example from outside of homotopy theory.  In a locally presentable category $\f C$ a morphism $f:A \to B$ is said to be a \emph{pure monomorphism} if in each square
\begin{equation*}
\cd{
n \ar[d]_{j} \ar[r]^{r} & A \ar[d]^{f} &&& n \ar[d]_{j} \ar[r]^{r} & A \\
m \ar[r]_{s} & B &&& m \ar@{..>}[ur]_{\exists}
}
\end{equation*}
with $n$ and $m$ finitely presentable objects there exists a diagonal $m \to A$ such that the triangle above right commutes.

This condition is equivalent to asking that $f:A \to B$ be injective in $\Arr(\cc)$ with respect to the pushout square
\begin{equation*}
\cd{n \ar[d]_{j} \ar[r]^{j} & m \ar[d]\\
m \ar[r] & m \cup_{n} m .} 
\end{equation*}
Taking such a pushout square for each morphism $f:n \to m$ between finitely presentable objects (of which there is only a set up to isomorphism) we obtain the pure monos as the corresponding injectives in $\Arr(\cc)$.
\end{example}

\subsection{Algebraic injectives and weak equivalences as the algebras for a monad}\label{section:AInj}
Let $J$ be a class of morphisms in $\f C$.  We often identify the class $\Inj(J)$ of $J$-injective objects with the corresponding full subcategory of $\f C$.

We enhance this by considering the category $\Ainj(J)$ of \emph{algebraic injectives}, an object of which is given by a pair $(C,c)$ where $C \in \cc$ together with extensions
$$\cd{A \ar[d]_{j} \ar[r]^{f} & C\\
B \ar@{.>}[ur]_{c(j,f)}}$$
for each lifting problem.  Morphisms $f:(C,c) \to (D,d)$ are morphisms of $\cc$ commuting with the given extensions.  

Observe that whilst an object $C$ is injective just when for all $j:A \to B \in J$ the function 
$$\f C(j,C):\f C(B,C) \to \f C(A,C)$$
is surjective, the algebraic variant enhances this by specifying a choice of section $c(j,-)$ for each such function.  Under this viewpoint, the morphisms of algebraic injectives are those commuting with the sections.  

The above can be concisely encoded by the fact that the square
\begin{equation}\label{eq:pullback1}
\cd{\Ainj(J) \ar[d]_{U} \ar[r]^{} & \SE([J,\Set]) \ar[d]^{V} \\
\f C \ar[r]^-{K} & \Arr([J,\Set])}
\end{equation}
is a pullback.  Here $\SE([J,\Set])$ is the category of \emph{split epimorphisms} in $[J,\Set]$ and $K$ the functor sending $C$ to the family $(\f C(j,C):C(B,C) \to \f C(A,C))_{j \in J}$.

\begin{example}[Equivalences of categories algebraically]
If $\cc=\Cat^{2}$ and $J$ consists of the three morphisms of \eqref{eq:CatInjectives} then an object of $\Ainj(J)$ is given by a fully faithful functor $f:A \to B$ together with, for each $b \in B$, an object $a_{b} \in A$ and choice of isomorphism $\phi_b:fa_{b} \cong b$.  That is, an equivalence of categories equipped with suitable witnesses to its essential surjectivity.  The morphisms are commutative squares preserving the chosen witnesses.
\end{example}

The category of algebraic injectives comes equipped with a forgetful functor $U:\Ainj(J) \to \cc$.  It follows from the work of Garner \cite{Garner2011Understanding} on algebraic weak factorisation systems that this forgetful functor, under rather general assumptions covering all of the examples thus far, has a left adjoint and is \emph{strictly monadic}.  Given that we are not interested in algebraic weak factorisation systems but merely algebraic injectives, we can explain this without too much trouble --- as we now do.

If $\cc$ is cocomplete we can, for each $C \in \cc$, form the pushout $RC$
\begin{equation}\label{eq:rconstruction}
\cd{
\Sigma_{j:A \to B \in J}\cc(A,C).A \ar[rr]^-{\epsilon_{C}} \ar[d]_{\Sigma_{j:A \to B \in J}1.\alpha} && C \ar[d]^{\eta_{C}}\\
\Sigma_{j:A \to B \in J}\cc(A,C).B \ar@{.>}[urr]^{c} \ar[rr] && RC
}
\end{equation}

in which $\epsilon_{C}$ is the unique map corresponding to the function $$\Sigma_{j:A \to B \in J}\cc(A,C) \to \cc(A,C):(j,f )\mapsto f.$$  Then $(R,\eta)$ is a pointed endofunctor and the universal property of the pushout $RC$ ensures that an $(R,\eta)$-algebra structure on $C$ amounts to a morphism $c$ as above rendering commutative the upper left triangle.  This in turn amounts to giving a section $c(j,-)$ of the function
$$\cc(j,C):\cc(B,C) \to \cc(A,C)$$
for each $j \in J$  --- that is, to the structure of an algebraic injective.  In this way we obtain an isomorphism $$\Ainj(J) \cong (R,\eta)\textnormal{-Alg}$$ over $\cc$.  Consequently free algebraic injectives exist just when free algebras for the pointed endofunctor $(R,\eta)$ do. In Appendix~\ref{sect:Pointed} we give a thorough treatment of the construction of free algebras for pointed endofunctors --- and so of free algebraic injectives --- but in the present section we content ourselves with citing existence results from the literature. Two size conditions guaranteeing existence --- identified and discussed further in Section 4 of \cite{Garner2011Understanding} --- are the following.

\begin{Conditions}\label{thm:size}
\begin{enumerate}
\item For each $X \in \cc$ there exists a regular cardinal $\alpha_X$ such that $\cc(X,-)$ preserves $\alpha_X$-filtered colimits.
\item $\cc$ admits a proper well copowered factorisation system $(\ce,\cm)$ and for each $X \in \cc$ there exists a regular cardinal $\alpha_X$ such that $\cc(X,-)$ preserves $\alpha_X$-filtered unions of $\cm$-subobjects.
\end{enumerate}
\end{Conditions}

(1) is stronger than (2) --- take the (Iso,All)-factorisation system --- and is satisfied by any locally presentable category.  This covers all of the examples of Section~\ref{section:Examples} except for topological spaces.  This last category does, however, satisfy (2) on taking $\ce$ to be the class of surjections and $\cm$ the class of subspace embeddings.


\begin{Theorem}\label{thm:AlgebraicInjectives}
Let $J$ be a set of morphisms and $\cc$ a cocomplete category satisfying either of the conditions in Conditions~\ref{thm:size}.  
\begin{enumerate}
\item Then the forgetful functor $U:\Ainj(J) \to \cc$ has a left adjoint and is strictly monadic.  
\item If moreover $\cc$ is locally presentable then $\Ainj(J)$ is too and $U$ is accessible.  
\end{enumerate}
\end{Theorem}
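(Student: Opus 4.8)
The plan is to build everything on the isomorphism $\Ainj(J) \cong (R,\eta)\textnormal{-Alg}$ over $\cc$ already established above, and to reduce both claims to the existence and accessibility of the \emph{free monad} on the pointed endofunctor $(R,\eta)$ of \eqref{eq:rconstruction}. The guiding principle is that whenever a pointed endofunctor admits a free monad $T$, one has a canonical isomorphism $(R,\eta)\textnormal{-Alg} \cong T\textnormal{-Alg}$ over $\cc$, and the forgetful functor out of the Eilenberg--Moore category $T\textnormal{-Alg}$ is strictly monadic essentially by definition. Composing this with the isomorphism above exhibits $U:\Ainj(J)\to\cc$ as strictly monadic, with left adjoint the resulting free functor, which gives (1). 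So the real content is to verify that $(R,\eta)$ admits a free monad under either hypothesis of Conditions~\ref{thm:size}, for which I would invoke the transfinite construction of Appendix~\ref{sect:Pointed}, in the spirit of \cite{Garner2011Understanding}.

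The first substantive step is therefore to read off, from a condition on $\cc$, the smallness condition on the specific functor $R$ that makes the transfinite construction converge. Inspecting \eqref{eq:rconstruction}, $R$ is assembled from the hom-functors $\cc(A,-)$ for the set-many domains $A$ of maps in $J$, composed with copower functors $(-)\cdot A$ and $(-)\cdot B$ (which, being left adjoints, preserve \emph{all} colimits), together with a set-indexed coproduct and a pushout; the latter two, being colimits themselves, commute with the colimits in question. Under Condition (1) I would choose a regular cardinal $\alpha$ bounding the presentability ranks $\alpha_A$ attached to the domains $A$ of $J$ -- the codomains $B$ enter only through the colimit-preserving copowers $(-)\cdot B$ and so impose no constraint -- whereupon $R$ preserves $\alpha$-filtered colimits, i.e.\ is $\alpha$-accessible, and the construction stabilises at $\alpha$. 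Under Condition (2) the same bookkeeping shows that $R$ preserves $\alpha$-filtered unions of $\cm$-subobjects, which is precisely the boundedness with respect to $(\ce,\cm)$ needed for convergence.

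For (2), when $\cc$ is locally presentable it satisfies Condition (1), so part (1) applies and $\Ainj(J)\cong T\textnormal{-Alg}$ for the free monad $T$. Moreover the accessibility of $R$ established above propagates through the free-monad construction to $T$, so that $T$ is an \emph{accessible} monad on a locally presentable category. I would then appeal to the standard fact that the category of algebras for an accessible monad on a locally presentable category is again locally presentable, with the forgetful functor preserving limits and $\lambda$-filtered colimits for a suitable regular cardinal $\lambda$; this simultaneously yields the local presentability of $\Ainj(J)$ and the accessibility of $U$.

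The main obstacle I anticipate lies not in the formal reductions but in the bookkeeping of the second paragraph, and especially in the Condition (2) case. There one must check that passing through the coproduct over $J$ and, above all, through the pushout defining $R$ genuinely preserves $\alpha$-filtered unions of $\cm$-subobjects rather than merely ordinary filtered colimits; this requires that the chosen factorisation system interact correctly with the pushout so that the transfinite construction stays within the bounds guaranteeing convergence. Once the smallness of $R$ is secured, the remaining steps are routine invocations of the free-monad machinery of Appendix~\ref{sect:Pointed} and of the standard theory of accessible monads on locally presentable categories.
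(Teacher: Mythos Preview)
Your proposal is correct and follows essentially the same route as the paper: both reduce everything to the isomorphism $\Ainj(J)\cong (R,\eta)\textnormal{-Alg}$, verify that $R$ inherits the required preservation property (of $\lambda$-filtered colimits under Condition~(1), of $\lambda$-filtered $\cm$-unions under Condition~(2)) from the hom-functors $\cc(A,-)$ for the domains of $J$, and then invoke the free-algebra machinery of Appendix~\ref{sect:Pointed} together with the Gabriel--Ulmer result on algebras for accessible monads. The only cosmetic difference is that the paper argues strict monadicity via Beck's theorem (creation of $U$-absolute coequalisers plus existence of a left adjoint) rather than via the identification with the Eilenberg--Moore category of the free monad, and obtains accessibility of $T$ by noting that $U$ \emph{creates} $\lambda$-filtered colimits rather than by propagating accessibility through the transfinite construction; but these are equivalent formulations of the same argument.
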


\begin{proof}
Since $U$ is, up to isomorphism over $\cc$, the forgetful functor from the category of $R$-algebras it creates $U$-absolute coequalisers.  Therefore it is strictly monadic if it has a left adjoint.  

Now suppose that the size condition \ref{thm:size}.2 holds.  Then there exists a $\lambda$ such that $\cc(X,-):\cc \to \Set$ preserves $\lambda$-filtered unions of $\cm$-subobjects where $X$ is the source of any morphism appearing in $J$.  It follows that the two endofunctors $C \mapsto \Sigma_{j:A \to B \in J}\cc(A,C).A$ and $C \mapsto\Sigma_{j:A \to B \in J}\cc(A,C).B$ have the same preservation property whence so does the pushout $R$.  Hence by Theorems' 14.3 and 15.6 of \cite{Kelly1980A-unified} the free $(R,\eta)$-algebra exists.  Alternatively see Appendix~\ref{sect:Pointed}.  Since Condition \ref{thm:size}.2 implies \ref{thm:size}.1 we have proven the first part.

Finally suppose that $\cc$ is locally presentable.  Then \ref{thm:size}.1 holds and $R$ preserves $\lambda$-filtered colimits for $\lambda$ constructed as above.  Hence $U:(R,\eta)\textnormal{-Alg} \to \cc$ creates them whence the induced monad $T=UF$ preserves them too.  Since, by \cite[Satz~10.3]{Gabriel1971Lokal}, the category of algebras for a $\lambda$-accessible monad on a locally $\lambda$-presentable category is again locally $\lambda$-presentable we are done.
\end{proof}

\begin{Corollary}\label{cor:monad1}
There is a monad $T$ on $\Arr(Top)$ such that a morphism bears $T$-algebra structure just when it is a weak homotopy equivalence.  Likewise there are monads detecting quasi-isomorphisms of chain complexes, equivalences of categories and of higher categories, pure monomorphisms and all of the examples from Section~\ref{section:Examples}.
\end{Corollary}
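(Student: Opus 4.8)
The plan is to combine the injectivity characterisations of Section~\ref{section:Examples} with the monadicity result of Theorem~\ref{thm:AlgebraicInjectives}, exploiting the fact that the underlying object of an algebraic injective is precisely an injective object. I would treat the topological case in detail, the others being entirely analogous.

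First I would take $\cc = \Arr(Top)$ as the ambient category and let $J = \{(j_n,\tau_n) : n \in \g N\}$ be the set of squares from the topological example of Section~\ref{section:Examples}, each regarded as a morphism $j_n \to \rho_n$ of $\Arr(Top)$. By the characterisation recalled there, an object $f$ of $\Arr(Top)$ is $J$-injective precisely when it is a weak homotopy equivalence. To invoke Theorem~\ref{thm:AlgebraicInjectives} I must check its hypotheses for $\cc = \Arr(Top)$: cocompleteness is immediate since colimits in $\Arr(Top)$ are computed pointwise, while the size condition \ref{thm:size}.2 should be inherited from $Top$ on equipping $\Arr(Top)$ with the pointwise (surjection, subspace-embedding) factorisation system.

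With the hypotheses in hand, Theorem~\ref{thm:AlgebraicInjectives}(1) yields that the forgetful functor $U:\Ainj(J) \to \Arr(Top)$ is strictly monadic; I would write $T = UF$ for the induced monad on $\Arr(Top)$. Strict monadicity provides an isomorphism $T\textnormal{-Alg} \cong \Ainj(J)$ over $\Arr(Top)$, so an object $f$ bears $T$-algebra structure exactly when it underlies an algebraic injective. But, as observed in Section~\ref{section:AInj}, algebraic-injective structure on $f$ amounts to a choice of section of each restriction function $\cc(j,f)$, and such sections exist if and only if those functions are surjective, that is, if and only if $f$ is $J$-injective. Chaining these equivalences, $f$ bears $T$-algebra structure if and only if $f$ is a weak homotopy equivalence. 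The remaining examples I would handle identically, now with locally presentable base categories: taking $\cc = \Arr(\ck)$ for the relevant locally presentable $\ck$ and $J$ the corresponding squares, the category $\Arr(\ck)$ is again locally presentable, so the size condition \ref{thm:size}.1 holds automatically and Theorem~\ref{thm:AlgebraicInjectives} applies verbatim, this time also yielding accessibility; for the $2$-monadic and Lack model structures I would instead use the injectivity descriptions transported along $\Arr(F)$ as in Section~\ref{section:Examples}.

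I expect the only genuine obstacle to be the verification that $\Arr(Top)$ satisfies the size condition \ref{thm:size}.2, since $Top$ -- unlike every other base category here -- is not locally presentable; the rest reduces to bookkeeping around the isomorphism $T\textnormal{-Alg} \cong \Ainj(J)$. The key check is that the pointwise factorisation system on $\Arr(Top)$ remains proper and well-copowered and that each representable $\Arr(Top)(X,-)$ preserves sufficiently filtered unions of embedding-subobjects; I expect this to follow because such unions are formed pointwise and the representables $Top(A,-)$ already enjoy the property, a hom-set in $\Arr(Top)$ being carved out of a product of two such hom-sets.
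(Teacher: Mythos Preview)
Your proposal is correct and follows essentially the same approach as the paper's own proof: identify $W$ with $\Inj(J)$ via Section~\ref{section:Examples}, observe that $J$-injectivity is equivalent to admitting algebraic-injective structure, then apply Theorem~\ref{thm:AlgebraicInjectives} to $\Arr(\cc)$ and take $T=UF$. You are in fact more explicit than the paper about the one point that deserves care---verifying that $\Arr(Top)$ inherits the size condition~\ref{thm:size}.2 from $Top$---whereas the paper's proof simply invokes Theorem~\ref{thm:AlgebraicInjectives} for $\Arr(\cc)$ after having discussed the size conditions only for the base categories $\cc$; your pointwise-factorisation argument is the right way to bridge that gap.
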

\begin{proof}
For each of these categories $\cc$ and class of morphisms $W$ we have, in Section~\ref{section:Examples}, described a set of morphisms $J$ of $\Arr(\cc)$ with $W=\Inj(J)$.  Now $f \in \Arr(\cc)$ belongs to $\Inj(J)$ if and only if it can be equipped with the structure of an algebraic injective $(f,\phi) \in \Ainj(J)$.\begin{footnote}{Of course this assertion makes use of the axiom of the choice.  Indeed when $J$ consists of the single morphism  
$$
\cd{\varnothing \ar[d]_{} \ar[r]^{} & 1 \ar[d]^{} \\
1 \ar[r] & 1
}$$
in $\Arr(\Set)$ it \emph{is} the axiom of choice!}\end{footnote}  By Theorem~\ref{thm:AlgebraicInjectives} the forgetful functor $U:\Ainj(J) \to \Arr(\cc)$ has a left adjoint and is strictly monadic.  Writing $T=UF$ for the monad induced by the adjunction it follows that $f$ admits the structure of an algebraic injective if and only if it admits the structure of a $T$-algebra.
\end{proof}



\subsection{Injectivity in locally presentable categories and weak equivalences in combinatorial model categories}\label{section:InjComb}

In Section~\ref{section:Examples} we have seen that the weak equivalences in many Quillen model categories can be described as injectives in the arrow category.  In which Quillen model categories $\cc$ is this the case?  

In the present section we will give a complete answer to this question in the case of \emph{combinatorial} model categories.  Recall that a model category $\cc$ is said to be combinatorial if it is both locally presentable and cofibrantly generated.  Our result follows easily from the following result, Theorem 4.8 of \cite{Adamek1994Locally}.

\begin{Theorem}\label{thm:Injectives}\textnormal{(Ad{\'a}mek and Rosick{\'y})}
Let $\cc$ be locally presentable.  A full subcategory $j:\ca \hookrightarrow \cc$ is of the form $\Inj(J)$ for $J$ a set of morphisms if and only if $\ca$ is accessible, accessibly embedded and closed under products in $\cc$.
\end{Theorem}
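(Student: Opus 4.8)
The plan is to prove the two implications separately, fixing at the outset a regular cardinal $\lambda$ for which \cc is locally $\lambda$-presentable, \ca is $\lambda$-accessible, and the inclusion $\ca \hookrightarrow \cc$ preserves $\lambda$-filtered colimits; such a $\lambda$ exists precisely because \ca is accessible and accessibly embedded. The forward implication is routine, while the converse — manufacturing from the three closure properties a set $J$ with $\Inj(J) = \ca$ — is where the work lies.

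Suppose first that $\ca = \Inj(J)$ for a set $J$. Enlarging $\lambda$, which I may do since $J$ is a set, I arrange that every domain and codomain of a morphism of $J$ is $\lambda$-presentable. Closure under products is immediate, since a morphism $A \to \prod_i C_i$ into a product of $J$-injectives extends along any $f : A \to B$ coordinatewise. Closure under $\lambda$-filtered colimits is nearly as fast: a test morphism $r : A \to C$ out of a $\lambda$-presentable object into such a colimit factors through some stage, extends there by injectivity, and is then extended by postcomposition with the colimit coprojection. Accessibility and accessible embeddedness of $\Inj(J)$ then follow from the standard accessibility theory for such subcategories — closure under $\lambda$-filtered colimits together with closure under $\lambda$-pure subobjects suffices (cf.\ also Theorem~\ref{thm:AlgebraicInjectives}). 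I record this implication as a lemma to be reused below.

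For the converse the decisive use of the hypotheses is the construction of weak reflections into \ca. Fix a $\lambda$-presentable object $X$. The pairs $(A, g)$ with $A$ a $\lambda$-presentable object of \ca and $g : X \to A$ form a set, and every morphism from $X$ into \ca factors through one of these because \ca is $\lambda$-accessible and accessibly embedded. Since \ca is closed under products, the induced map $\eta_X : X \to RX$ into the product $RX$ of all such $A$ lies in \ca and enjoys the weak universal property that every morphism $X \to A'$ with $A' \in \ca$ factors, not necessarily uniquely, through it. I set $J = \{\eta_X\}$, indexed by a set of representatives of the $\lambda$-presentable objects of \cc. The weak universal property immediately yields $\ca \subseteq \Inj(J)$, since any $A \in \ca$ extends a morphism out of $X$ along $\eta_X$.

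The reverse inclusion $\Inj(J) \subseteq \ca$ is the heart of the matter. For a $\lambda$-presentable $C \in \Inj(J)$ it is clean: as $\eta_C$ belongs to $J$, injectivity of $C$ lets me extend $\mathrm{id}_C$ along $\eta_C$, so $\eta_C$ is a split monomorphism and $C$ is a retract of $RC \in \ca$; since a retract is the ($\omega$-filtered) colimit of its splitting idempotent, closure of \ca under $\lambda$-filtered colimits places $C$ in \ca. For a general $C \in \Inj(J)$ I would invoke the forward implication applied to $J$ to present $C$ as a $\lambda$-filtered colimit of $\lambda$-presentable objects of $\Inj(J)$, each of which lies in \ca by the previous sentence, and then conclude by closure of \ca under $\lambda$-filtered colimits. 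The main obstacle I anticipate is the cardinal bookkeeping underlying this last move: the weak reflections $X \mapsto RX$ are non-functorial and have large codomains, so one must choose $\lambda$ carefully enough that the $\lambda$-presentable objects of \cc, of \ca, and of $\Inj(J)$ all cohere — in particular that a $\lambda$-presentable object of $\Inj(J)$ is $\lambda$-presentable in \cc, so that its reflection is among the chosen morphisms of $J$ and the retract argument applies. Reconciling these presentability ranks, rather than any single diagram chase, is where I expect the genuine difficulty to concentrate.
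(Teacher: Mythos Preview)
Your forward implication is correct but sidesteps the paper's contribution: the point of the paper's proof is precisely to replace the ``standard accessibility theory'' (which in \cite{Adamek1994Locally} routes through an ad-hoc embedding into graphs) by a direct argument from algebraic injectives. The paper uses that $\Ainj(J)$ is locally presentable with accessible monadic $U:\Ainj(J)\to\cc$, applies the uniformization theorem to obtain $\mu$ for which $U$ preserves $\mu$-presentables and $\mu$-filtered colimits, and then takes the images under $U$ of a generating set of $\mu$-presentables in $\Ainj(J)$ as generators for $\Inj(J)$. Your parenthetical reference to Theorem~\ref{thm:AlgebraicInjectives} gestures at this but does not carry it out.

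In the converse the obstacle you isolate is genuine and is the crux, not mere bookkeeping. Because your codomains $RX$ are large products, the morphisms of your $J$ are not between $\lambda$-presentable objects; so invoking the forward direction yields $\Inj(J)$ only $\mu$-accessible for some $\mu$ possibly far above $\lambda$, with no mechanism forcing its $\mu$-presentable objects to be $\lambda$-presentable in $\cc$ --- and your retract step needs exactly that to place $\eta_C$ in $J$. The paper (following \cite{Adamek1994Locally}) resolves this with two moves you are missing. First, it factors each weak reflection $X \to jY$ through some $jY_i$ with $Y_i$ $\mu$-presentable in $\ca$ --- using that $\ca$ is $\mu$-accessible and that $j$ preserves $\mu$-presentables --- so that $J$ consists of morphisms between $\mu$-presentable objects of $\cc$. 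Second, it abandons the retract-then-reapply-forward-direction strategy in favour of a direct cofinality argument: for $Y\in\Inj(J)$ the full subcategory of the canonical diagram $Pres_\mu(Y)$ on maps of the form $jX^\star \to Y$ is cofinal, since every $A\to Y$ factors through $A\to jA^\star$ by $J$-injectivity of $Y$; this exhibits $Y$ itself as a $\mu$-filtered colimit of objects of $\ca$. The cofinality step is what breaks the circularity --- your retract argument, while fine for individual $\lambda$-presentable $C$, does not extend to general $C$ without it.
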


The proof in \cite{Adamek1994Locally} uses the fact that each injectivity class admits a full embedding into the category of graphs.  This seems to the author rather ad-hoc.  As an application of \emph{algebraic injectivity}, we give a novel proof that avoids any such embedding.

As in \cite{Adamek1994Locally} we will use the \emph{uniformization theorem} for accessible categories --- see Theorem 2.19 of \emph{ibid.} or Theorem 2.49 of \cite{Makkai1989Accessible} for the original reference --- the relevant part of which asserts the following.

\begin{itemize}
\item 
Let $U:\ca \to \cb$ be an accessible functor between accessible categories.  There exist arbitarily large regular cardinals $\lambda$ for which $\ca$ and $\cb$ are $\lambda$-accessible and such that $U$ preserves both $\lambda$-presentable objects and $\lambda$-filtered colimits.
\end{itemize}

\begin{proof}[Proof of Theorem \ref{thm:Injectives}]
Let $J$ and $\cc$ be given, and let $\lambda$ be such that each $j \in J$ has $\lambda$-presentable source and target.    Then the full subcategory $j:\Inj(J) \to \cc$ is closed under products and $\lambda$-filtered colimits.  

By Theorem~\ref{thm:AlgebraicInjectives} the category $\Ainj(J)$ of algebraic injectives is locally $\lambda$-presentable and $U:\Ainj(J) \to \cc$ a $\lambda$-accessible right adjoint.  Therefore by the uniformization theorem there exists $\mu \geq \lambda$ such that $U$ preserves $\mu$-filtered colimits and $\mu$-presentable objects and is a functor between locally $\mu$-presentable categories.

We claim that $\Inj(J)$ is $\mu$-accessible and $\mu$-accessibly embedded in $\cc$.  Since $j:\Inj(J) \to \cc$ preserves $\lambda$-filtered colimits it preserves $\mu$-filtered colimits. In particular, this implies that if $jX$ is $\mu$-presentable in $\cc$ then $X$ is $\mu$-presentable in $\Inj(J)$.

For $\mu$-accessibility we must exhibit a set $S$ of $\mu$-presentables in $\Inj(J)$ such that each $X \in \Inj(J)$ is a $\mu$-filtered colimit of those in $S$.  Since $\Ainj(J)$ is locally $\mu$-presentable it admits such a set of $\mu$-presentables and we define $S$ to consist of the image of these under the $\mu$-presentable preserving $U:\Ainj(J) \to \cc$.   Now $X \in \Inj(J)$ underlies $(X,x) \in \Ainj(J)$.  Consider the $\mu$-filtered colimit $(X,x)=col_{i \in I}(X_{i},x_{i})$ of $\mu$-presentables in $\Ainj(J)$.  By the above properties of $U$ the colimit $X=Ucol_{i \in I}(X_{i},x_{i})=col_{i \in I}X_{i} \in \cc$ is a $\mu$-filtered colimit of $\mu$-presentable objects in $\cc$ which are $J$-injective.  Since $j:\Inj(J) \to \cc$ is closed under $\mu$-filtered colimits these objects are also $\mu$-presentable in $\Inj(J)$ and so exhibit $Y$ as a $\mu$-filtered colimit in $\Inj(J)$ of objects in $S$.

The converse direction is exactly as in \cite{Adamek1994Locally} and we include it only for completeness.  Let $j:\ca \to \cc$ satisfy the stated properties.  By the uniformization theorem there exists $\mu$ such that \ca,\cc are $\mu$-accessible and such that $j$ preserves both $\mu$-presentable objects and $\mu$-filtered colimits.

Since $j$ is accessible it satisfies the solution set condition.  In particular, for each $\mu$-presentable object $X \in \cc$ the category $X/j$ admits a weakly initial set of objects.  Since $j$ preserves products it follows that $X/j$ admits them --- constructed as in \ca --- so that the product in $X/j$ of the weakly initial set of objects exists and forms a weakly initial object $X \to jY$.  Now write $Y$ as a $\mu$-filtered colimit of $\mu$-presentables $Y_{i}$.  Then $jY$ is still a $\mu$-filtered colimit, still of $\mu$-presentables $jY_{i}$, whence the morphism $X \to jY$ factors as $X \to jY_{i}$ for some $i$: itself now a weakly initial object in $X/j$.  In summary, for $\mu$-presentable $X$ the comma category $X/j$ has a weakly initial object $p_{X}:X \to jX^{\star}$ with $\mu$-presentable codomain. 

We take the union of these morphisms $$J=\{X \to j X^{\star}:X\textnormal{ } \mu\textnormal{-presentable} \}$$ over a representative set of the $\mu$-presentable objects in $\cc$ and claim that $\ca = \Inj(J)$. Weak initiality ensures that  each object of $\ca$ is $J$-injective.  For the reverse inclusion, let $Y$ be $J$-injective consider its canonical presentation as a $\mu$-filtered colimit of $\mu$-presentables.  Here the canonical diagram $Pres_{\mu}(Y)$ has for objects those morphisms $A \to Y$ with $A$ $\mu$-presentable.  The trick is to consider the full subcategory $\ck \hookrightarrow Pres_{\mu}(Y)$ consisting of those morphisms of the form $j X^{\star} \to Y$ and to show that the inclusion is cofinal: then $\ck$ will itself be $\mu$-filtered and, by cofinality, $Y$ a $\mu$-filtered colimit of objects in the image of $j$; since $\ca$ is closed under $\mu$-filtered colimits $Y$ will be in the image of $j:\ca \hookrightarrow \cc$ too.  Cofinality follows from the fact that since $Y \in \Inj(J)$ each morphism $A \to Y$ factors through $A \to jA^{\star}$  and that $Pres_{\mu}(Y)$ is filtered.
\end{proof}

The following result covers all of the model categorical examples of Section~\ref{section:Examples} except for the model category of topological spaces.

\begin{Theorem}\label{thm:CombinatorialInj}
Let $\cc$ be a combinatorial model category with class of weak equivalence $W$.  The following are equivalent.
\begin{enumerate}
\item $W \hookrightarrow \Arr(\cc)$ is closed under all small products.
\item $W \hookrightarrow \Arr(\cc)$ is of the form $\Inj(J)$ for $J$ a set of morphisms in $\Arr(\cc)$.
\item There exists a monad $T$ on $\Arr(\cc)$ such that a morphism $f$ bears $T$-algebra structure just when it belongs to $W$.
\item There exists an accessible monad $T$ on $\Arr(\cc)$ such that a morphism $f$ bears $T$-algebra structure just when it belongs to $W$.
\end{enumerate}
In particular, these equivalent conditions hold whenever all objects in $\cc$ are fibrant.
\end{Theorem}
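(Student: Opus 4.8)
The plan is to establish the cycle of implications $(1)\Rightarrow(2)\Rightarrow(4)\Rightarrow(3)\Rightarrow(1)$, and then to derive the final ``in particular'' clause from condition $(1)$. The equivalence of $(1)$ and $(2)$ is essentially a direct application of the Ad\'amek--Rosick\'y characterisation of injectivity classes (Theorem~\ref{thm:Injectives}), so the real work lies in verifying its hypotheses.

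First I would prove $(1)\Rightarrow(2)$. By Theorem~\ref{thm:Injectives}, a full subcategory $\ca\hookrightarrow\Arr(\cc)$ is of the form $\Inj(J)$ for a set $J$ precisely when $\ca$ is accessible, accessibly embedded, and closed under products. Closure under products is exactly hypothesis $(1)$. Since $\cc$ is combinatorial, $\Arr(\cc)$ is locally presentable, so it remains to check that $W\hookrightarrow\Arr(\cc)$ is accessible and accessibly embedded. This is where the combinatorial hypothesis does its work: in a combinatorial model category the class $W$ of weak equivalences is an accessible and accessibly embedded subcategory of $\Arr(\cc)$. The standard reference for this fact is J.~H.~Smith's characterisation (as presented, e.g., in Dugger's account of combinatorial model categories, or in Lurie's treatment), which guarantees that $W$ is accessible; I would cite this rather than reprove it. With accessibility of $W$ in hand together with closure under products, Theorem~\ref{thm:Injectives} yields $W=\Inj(J)$ for some set $J$ of morphisms in $\Arr(\cc)$. \emph{This citation of the accessibility of $W$ is the step I expect to be the main obstacle}, in the sense that it is the one substantive input not developed within the paper itself; everything else is formal.

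Next, $(2)\Rightarrow(4)$ is immediate from the machinery already built. If $W=\Inj(J)$ for a set $J$, then since $\Arr(\cc)$ is locally presentable it is cocomplete and satisfies Conditions~\ref{thm:size}.1, so Theorem~\ref{thm:AlgebraicInjectives} applies: the forgetful functor $U:\Ainj(J)\to\Arr(\cc)$ is strictly monadic with an accessible right adjoint, yielding an accessible monad $T=UF$. As observed in the proof of Corollary~\ref{cor:monad1}, a morphism $f$ admits $T$-algebra structure if and only if it admits algebraic injective structure, which (invoking choice) holds if and only if $f\in\Inj(J)=W$. The implication $(4)\Rightarrow(3)$ is trivial, since every accessible monad is in particular a monad. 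For $(3)\Rightarrow(1)$ I would argue that the objects bearing $T$-algebra structure, being the objects in the image of the forgetful functor $U^{T}:T\text{-Alg}\to\Arr(\cc)$ from the category of algebras for a monad, are closed under products: the forgetful functor $U^{T}$ creates limits, so given a family of weak equivalences $(f_i)$ each carrying $T$-algebra structure, the product $\prod_i f_i$ inherits a canonical $T$-algebra structure and hence lies in $W$. This closes the cycle.

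Finally, for the ``in particular'' clause I would verify that when every object of $\cc$ is fibrant, condition $(1)$ holds. When all objects are fibrant, the weak equivalences coincide with those morphisms detected by a suitable right-derived mapping construction that commutes with products; concretely, since products of fibrant objects are fibrant and the weak equivalences between fibrant objects are stable under small products (a consequence of the fact that a product of weak equivalences between fibrant objects is again a weak equivalence, which one checks using that $\cc(A,-)$-style derived hom functors or the homotopy relation on fibrant objects are product-stable), the class $W$ is closed under small products in $\Arr(\cc)$. Thus $(1)$ holds and the equivalent conditions follow. I would phrase this last part carefully, as the product-stability of weak equivalences between fibrant objects is the genuine content here and should be justified rather than asserted.
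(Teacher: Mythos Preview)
Your proposal is correct and follows essentially the same route as the paper's proof: the cycle $(1)\Leftrightarrow(2)$ via Theorem~\ref{thm:Injectives}, then $(2)\Rightarrow(4)\Rightarrow(3)\Rightarrow(1)$ via Theorem~\ref{thm:AlgebraicInjectives} and creation of products by monadic functors. Two minor sharpenings worth noting: for the accessibility of $W$ the paper cites Theorem~4.1 of Rosick\'y's \emph{On combinatorial model categories} rather than the Smith/Dugger/Lurie sources you mention; and for the ``in particular'' clause the paper invokes Ken Brown's lemma directly to conclude that products of weak equivalences between fibrant objects are weak equivalences, which is crisper than the derived-hom argument you sketch.
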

\begin{proof}
By Theorem 4.1 of \cite{Rosicky2007On-combinatorial} if $\cc$ is combinatorial the full subcategory $W \hookrightarrow \Arr(\cc)$ is accessible and accessibly embedded.  Theorem~\ref{thm:Injectives} thus ensures that $W$ is a small injectivity class if and only if $W$ is closed under products in $\Arr(\cc)$ proving that $(1 \iff 2)$.  Arguing as in the proof of Corollary~\ref{cor:monad1}  the monad $T=UF$ is that induced by the accessible monadic $U:\Ainj(J) \to \Arr(\cc)$ of Theorem~\ref{thm:AlgebraicInjectives}.  Thus  $(2 \implies 4)$ whilst $(4 \implies 3)$ is trivial.   Since the forgetful functor from the category of $T$-algebras to the base $\Arr(\cc)$ creates products we obtain $(3 \implies 1)$.

Finally we use the well known fact --- which follows from Ken Brown's lemma --- that products of weak equivalences between fibrant objects are again weak equivalences.
\end{proof}

\section{Cone injectivity and weak equivalences of simplicial sets}\label{section:Cones}

It is not the case that the weak homotopy equivalences of simplicial sets can be described using injectivity nor as the algebras for a monad.  Indeed both injectives and those objects admitting algebra structure for a given monad are closed under all small products, whereas:

\begin{Proposition}
Weak homotopy equivalences of simplicial sets are not closed under countable products.
\end{Proposition}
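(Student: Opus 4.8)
The plan is to exhibit a single weakly contractible simplicial set $X$ whose countable power $X^{\mathbb{N}}$ fails to be connected; since each factor $X \to \Delta^0$ is a weak equivalence, the induced map $X^{\mathbb{N}} \to \Delta^0$ on the countable product will then be a product of weak equivalences that is not itself a weak equivalence. The reason one is forced into non-fibrant objects is that, by Ken Brown's lemma (as used at the end of the proof of Theorem~\ref{thm:CombinatorialInj}), products of weak equivalences between fibrant objects are again weak equivalences; so any counterexample must exploit the failure of this for non-Kan complexes.

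I would take $X$ to be the \emph{infinite ray}: the $1$-dimensional simplicial set with vertices $v_0, v_1, v_2, \dots$ and a single nondegenerate edge $e_i \colon v_i \to v_{i+1}$ for each $i \geq 0$, with no simplices of higher dimension. Its geometric realisation is the half-line $[0,\infty)$, which is contractible, so $X$ is weakly contractible and $X \to \Delta^0$ is a weak equivalence.

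The heart of the argument is to compute $\pi_0(X^{\mathbb{N}})$ and show it is not a singleton. Recall that for any simplicial set $Z$ the set $\pi_0(Z)$ is the coequaliser of $d_0, d_1 \colon Z_1 \rightrightarrows Z_0$, that is, the quotient of the vertices by the equivalence relation generated by ``joined by an edge''. For $Z = X^{\mathbb{N}}$ one has $Z_0 = \prod_n X_0$ and $Z_1 = \prod_n X_1$ with the faces acting coordinatewise, so a $1$-simplex of $Z$ is a sequence of $1$-simplices of $X$, each of which is either degenerate or some $e_i$, and hence changes the index of its endpoint in that coordinate by at most one. I would then consider the two vertices $w = (v_0, v_0, v_0, \dots)$ and $w' = (v_n)_{n \in \mathbb{N}}$ whose $n$-th coordinate is $v_n$, and observe that any path from $w$ to $w'$ built from $m$ edges (traversed in either direction) can alter the index in each coordinate by at most $m$; since $w'$ has index $n$ in its $n$-th coordinate while $w$ has index $0$ everywhere, no \emph{finite} $m$ suffices in all coordinates simultaneously. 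Thus $w$ and $w'$ lie in distinct path components, $\pi_0(X^{\mathbb{N}})$ has at least two elements, and $X^{\mathbb{N}}$ is not weakly contractible.

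The main obstacle, and the only point needing care, is the passage from ``each coordinate moves by a bounded amount'' to ``the two vertices are genuinely unidentified in the coequaliser'': one must argue at the level of the generated equivalence relation, tracking that every zigzag of finitely many edges and degeneracies changes each coordinate's index by a bounded total, with the degeneracies contributing nothing. The underlying phenomenon is that geometric realisation does not preserve infinite products, so that $\lvert X^{\mathbb{N}} \rvert$ is disconnected even though $\prod_n \lvert X \rvert = [0,\infty)^{\mathbb{N}}$ is connected; the combinatorial $\pi_0$ computation above makes this precise without appealing to realisation directly.
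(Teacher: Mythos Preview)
Your proposal is correct and is essentially the same argument as the paper's: both take the infinite ray of $1$-simplices (the paper produces it as $SA$ via the skeleton functor from reflexive graphs, you describe it directly) and show its countable power has distinct path components by the identical index-boundedness argument on zigzags. The only cosmetic differences are that the paper argues weak contractibility of the ray by exhibiting $\Delta_0 \to SA$ as a transfinite composite of pushouts of the trivial cofibration $\Delta_0 \to \Delta_1$, whereas you appeal to the geometric realisation $[0,\infty)$.
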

\begin{proof}
Consider the reflexive directed graph $A$ with objects the natural numbers and with a unique map $n \to m$ if $m=n$ or $m=n+1$.  This has a single path component.  The countable product $A^{\omega}$, on the other hand, has more than one path component.  Its objects are countable sequences $(x_{i})$ and there exists a (unique) map $(x_{i}) \to (y_{i})$ if for each $j$ either $y_{j}=x_{j}$ or $y_{j}=x_{j}+1$.  Accordingly in $A^{\omega}$ there exists no path from $(1,1,1,....)$ to $(1,2,3,.....)$.

The category of reflexive directed graphs is the presheaf category $[\Delta_{1}^{op},\Set]$ where $\Delta_{1} \hookrightarrow \Delta$ is the full subcategory containing $[0]$ and $[1]$.  Left Kan extension along the inclusion yields the skeleton functor $S:[\Delta_{1}^{op},\Set]\to [\Delta^{op},\Set]$.  Observe that $SA$ has the same underlying reflexive graph as $A$.  Now $A=\cup_{n \in \mathbb N}A_{n}$ is a directed union of reflexive graphs $A_{n}$ where $A_{0}=\Delta_{1}(-,0)$ and where $A_{n+1} = A_{n} \cup_{\Delta_{1}(-,0)} \Delta_{1}(-,1)$ is the pushout obtained by attaching the edge from $n$ to $n+1$.  As is standard we denote the representable $\Delta(-,n)$ by $\Delta^n$.  Since $S$ preserves colimits and sends representables to representables we obtain $SA=\cup_{n \in \mathbb N}SA_{n}$ where $SA_{0}=\Delta^0$ and $SA_{n+1} = SA_{n} \cup_{\Delta^0} \Delta^1$.  The pushout coprojection $j_{n}:SA_{n} \to SA_{n} \cup_{\Delta^0} \Delta^1=SA_{n+1}$ is a pushout of the trivial cofibration $\Delta^{0} \to \Delta^{1}$ and so a trivial cofibration itself.  Therefore the countable composite of the chain of maps $(j_{n})_{n \in \mathbb N}$ is a trivial cofibration $\Delta^{0} =SA_{0} \to SA$.  It follows, by three from two, that the unique map $!:SA \to \Delta^0$ is a weak equivalence.  

On the other hand the countable product $!^{\omega}:(SA)^{\omega} \to (\Delta^0)^\omega \cong \Delta^0$ cannot be a weak equivalence --- for $\Pi_{0}((SA)^{\omega})$ is the set of path components of the underlying reflexive graph $A^{\omega}$ of $(SA)^{\omega}$, and this, as we have seen, has cardinality greater than $1$.
\end{proof}

In order to capture the weak equivalences of simplicial sets as injectives we pass from injectivity with respect to a set of morphisms to injectivity with respect to a set of \emph{cones}.  Cone injectivity was introduced by John \cite{John1977A-note} whilst a good textbook reference is \cite{Adamek1994Locally}.  

To motivate the general definition let us consider what it means for a morphism $f:X \to Y$ of simplicial sets to induce a surjection $\Pi_{0}f:\Pi_{0}X \to \Pi_{0}Y$ between sets of path components.  This amounts to asking that for each $y \in Y_{0}$ there exists $x \in X_{0}$ and a zigzag of $1$-simplices as below $$\cd{fx=x_0 \ar[r] & x_1 & x_2 \ar[l] \ar[r] & x_3 & \ldots \ar[l] \ar[r] & x_{2n-1} & x_{2n}=y \ar[l]}$$
where $n \in \mathbb N$.  (In the case that $Y$ is a Kan complex it suffices to take the case $k=1$ but in general we require all possible lengths.)

Let $Z_n$ denote the generic simplicial set containing a zigzag of 1-cells
$$\cd{0 \ar[r] & 1 & 2 \ar[l] \ar[r] & 3 & \ldots \ar[l] \ar[r] & 2n-1 & 2n \ar[l]}$$
and $j_0,j_{2n}:\Delta^0 \rightrightarrows Z_n$ the two maps selecting the endpoints.  We obtain a morphism
\begin{equation}\label{eq:cone1}
\cd{\varnothing \ar[d]_{!_{0}} \ar[r]^{!_{0}} & \Delta^0 \ar[d]^{j_{2n}}\\
\Delta^0 \ar[r]^{j_0} & Z_n} 
\end{equation}
in $\Arr(\SSet)$ and so a countable set of morphisms $$\{(!_{0},j_0):!_0 \to j_{2n}, n \in \mathbb N\}$$ with common source --- that is, a \emph{cone}.  We now see that $\Pi_{0}f$ is surjective exactly when each $(r,s):!_0 \to f$ factors through \emph{some} member $(!_0,j_0):!_0 \to j_{2n}$ of the cone.

Let us now turn to the general concept.  A cone $$p=\{p_{i}:A \to B_{i}:i \in I\}$$  in a category $\cc$ consists of a set of morphisms in $\cc$ with common source.  Given an object $X$ of $\cc$ we write $p \perp X$ if for each $f:A \to X$ 
there exists $i \in I$ and an extension
\begin{equation*}
\cd{A \ar[d]_{p_{i}} \ar[r]^{f} & X . \\
B_{i} \ar[ur]_{\exists} }
\end{equation*}
For a class of cones $J$ we write $J \perp X$ if $p \perp X$ for each $p \in J$.  We call $X$ injective and write $\Inj(J)$ for the full subcategory of $\cc$ consisting of the $J$-injectives.  Of course injectivity with respect to cones specialises to ordinary injectivity on considering cones containing a single arrow.

\subsection{Algebraic cone injectives}\label{sect:AlgebraicCones}
Let $J$ be a class of cones in a category $\cc$. An algebraic injective consists of a pair $(C,c_1,c_2)$ where, to begin with, we have $C \in \cc$.  Given a cone $p=\{p_{i}:A \to B_{i}:i \in I\} \in J$ and a morphism $f:A \to C$ we are provided with an index $c_{1}(p,f) \in I$ together with an extension of $f$
$$\cd{A \ar[d]_{p_{c_{1}(p,f)}} \ar[r]^{f} & C\\
B_{c_{1}(p,f)} \ar@{.>}[ur]_{c_{2}(p,f)}}$$
through the member of the cone indexed by $c_1(p,f)$.  Morphisms $g:(C,c_1,c_2) \to (D,d_1,d_2)$ respect both the choice of index and of extension.

In the case that the cones are just single morphisms this agrees with the notion of algebraic injective of Section~\ref{section:AInj}.  Observe that whilst an object $C$ is injective just when for each cone $p=\{p_{i}:A \to B_{i}:i \in I\} \in J$ the function 
\begin{equation}
\cd{\Sigma_{i \in I}\cc(B_{i},C) \ar[r]^{} & \cc(A,C)}
\end{equation}
is surjective, algebraically injectivity in the cone context enhances this by specifying a choice of section $(c_1(p,-),c_2(p,-))$ for each such function.  Under this viewpoint, the morphisms of algebraically injective objects are those commuting with the sections.  

As in \eqref{eq:pullback1} the above is concisely encoded by the fact that the square
\begin{equation}\label{eq:pullback2}
\cd{\Ainj(J) \ar[d]_{U} \ar[r]^{} & \SE([J,\Set]) \ar[d]^{V} \\
\f C \ar[r]^-{K} & \Arr([J,\Set])}
\end{equation}
is a pullback.  Here $\SE([J,\Set])$ is the category of \emph{split epimorphisms} in $[J,\Set]$ as before whilst this time $K$ sends $C$ to the family $(\Sigma_{i \in I}\cc(B_{i},C) \to \cc(A,C))_{p \in J}$.

For ordinary injectivity, we observed that if $\cc$ is locally presentable and $J$ a set of morphisms then $\Ainj(J)$ is locally presentable and $U:\Ainj(J) \to \cc$ an accessible monadic right adjoint.  In the setting of cones there is an analogous result.  

\begin{Proposition}\label{prop:AlgebraicConeInjectives}
Let $J$ be a set of cones in a locally presentable category $\cc$.  Then $\Ainj(J)$ is locally multi-presentable and $U:\Ainj(J) \to \cc$ is an accessible strictly monadic right multi-adjoint.
\end{Proposition}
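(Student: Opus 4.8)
The plan is to exploit the pullback square \eqref{eq:pullback2} exactly as the proof of Theorem~\ref{thm:AlgebraicInjectives} exploited \eqref{eq:pullback1}, but now keeping careful track of \emph{which} limits are preserved. First I would record that each of the three categories around \eqref{eq:pullback2} is locally presentable: $\cc$ by hypothesis; $[J,\Set] \cong \Set^{J}$, and hence $\Arr([J,\Set])$, because $J$ is a set; and $\SE([J,\Set])$ because the section condition $gs=1$ is a finite-limit condition, so split epimorphisms are the models of a limit sketch over $\Arr([J,\Set])$. Next I would verify that both legs are accessible: $V$ is forgetful and preserves filtered colimits, while $K$ preserves $\lambda$-filtered colimits once $\lambda$ is chosen so large that every source $A$ and every target $B_{i}$ occurring in a cone of $J$ is $\lambda$-presentable — here one uses that a coproduct $\Sigma_{i \in I}\cc(B_{i},-)$ of representables still preserves filtered colimits, coproducts being themselves colimits. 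Since accessible categories and accessible functors are closed under pullback (and $V$, forgetting structure along split epis, is an isofibration, so the strict and pseudo pullbacks agree), it follows at once that $\Ainj(J)$ is accessible and that $U$ is accessible.

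The crux is the limit behaviour, and this is where the cone case diverges from Theorem~\ref{thm:AlgebraicInjectives}. The functor $V$ preserves all limits, but $K$ does not: because of the coproduct $\Sigma_{i \in I}$ it fails to preserve products. What it does preserve is \emph{connected} limits. Indeed $\cc(A,-)$ and each $\cc(B_{i},-)$ preserve all limits, and in $\Set$ coproducts commute with connected limits, so $\Sigma_{i \in I}\cc(B_{i},-)$ preserves connected limits; as connected limits in $\Arr([J,\Set]) \cong \Arr(\Set)^{J}$ are computed componentwise on domains and codomains, $K$ preserves them too. Consequently the pullback $\Ainj(J)$ has connected limits, created by $U$. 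Being accessible and possessing connected limits, $\Ainj(J)$ is locally multi-presentable, by Diers' characterisation of such categories as precisely the accessible categories with connected limits; and $U$, an accessible functor preserving connected limits between locally multi-presentable categories, is a right multi-adjoint by the multi-adjoint functor theorem. The failure of $K$ to preserve products is precisely what turns the ordinary adjoint of Theorem~\ref{thm:AlgebraicInjectives} into a genuine multi-adjoint, the branching corresponding to the choice of index $c_{1}(p,f) \in I$.

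It remains to upgrade right multi-adjointness to strict (multi-)monadicity. Here I would argue, as in the proof of Theorem~\ref{thm:AlgebraicInjectives}, that $U$ creates $U$-absolute coequalisers: a $U$-absolute coequaliser is preserved by every functor, in particular by $K$, so its image sits under an absolute coequaliser in $\Arr([J,\Set])$, along which the forgetful $V$ from split epimorphisms transports sections and creates the coequaliser; the pullback then creates it in $\Ainj(J)$. A right multi-adjoint that creates its absolute coequalisers is strictly monadic over its base in Diers' sense, which yields the final assertion. I expect the delicate point to be the connected-limit computation of the second paragraph — in particular isolating the fact that coproducts commute with connected limits in $\Set$, and recognising that this is exactly what produces a multi-adjoint rather than an adjoint; the accessibility and monadicity steps are then routine adaptations of Theorem~\ref{thm:AlgebraicInjectives}.
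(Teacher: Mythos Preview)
Your proposal is correct and follows essentially the same route as the paper: both exploit the pullback square \eqref{eq:pullback2}, establish that $K$ is accessible and preserves connected limits (via coproducts commuting with connected limits in $\Set$), use the isofibration property of $V$ to deduce accessibility of the pullback, invoke Diers' characterisation of local multi-presentability and the multi-adjoint functor theorem, and finish with creation of $U$-split coequalisers for strict monadicity. The only cosmetic difference is that the paper identifies $\SE([J,\Set])$ explicitly as the functor category $[\cs,[J,\Set]]$ for $\cs$ the walking split epimorphism, which makes the monadicity and limit-preservation of $V$ immediate.
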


Some readers may be unfamiliar with the \emph{multi}-aspects above.  We say enough about them only to prove the result.  The concepts of locally multi-presentable category and of multimonad were developed by Yves Diers \cite{Diers1980Categories,Diers1980Multi, Diers1981Some}.  Section 4 of \cite{Adamek1994Locally} is a useful textbook reference.

\begin{itemize}
\item A category is \emph{locally multi-presentable} just when it is accessible and has \emph{connected limits} (or, equivalently, \emph{multicolimits}). See \cite{Diers1980Categories, Adamek1994Locally}.
\item  A functor $U:\ca \to \cb$ is a \emph{right multiadjoint} if for each $B \in \cb$ there exists a cone $\eta=\{\eta_{i}:B \to UA_i:i \in I\}$ with the universal property that given $f:B \to UC$ there exists a unique pair $(i \in I,g:A_{i} \to C)$ such that $Ug \circ \eta_i = f$.  Note that the cone is determined up to unique isomorphism.
\item If $U:\ca \to \cb$ is a right multiadjoint one obtains a \emph{multi-monad} $T$ on $\cb$.  As in the classical setting this has a category of algebras $U^{T}:\TAlg \to \cb$ over $\cb$ and there is a canonical comparison $K:\ca \to \TAlg$ commuting with the forgetful functors to $\cb$.  As usual one says that $U$ is strictly monadic/monadic if $K$ is an isomorphism/equivalence.  
\end{itemize}

\begin{proof}[Proof of Proposition~\ref{prop:AlgebraicConeInjectives}]
Let $\cs$ be the free split epimorphism --- the category presented by the graph $\langle e:0 \leftrightarrows 1: m \rangle$ subject to the relation $e \circ m = 1$ --- and let $j:\atwo \to \cs$ be the identity on objects functor selecting the split epi $e$. Then the forgetful functor $V$ of \eqref{eq:pullback1} is $[j,1]:[\cs,[J,\Set]] \to [\atwo,[J,\Set]]$.  $V$ has a left adjoint $F$ given by left Kan extension along $j$.  Since $j$ is identity on objects $V=[j,1]$ strictly creates colimits and so is strictly monadic.

Next we show that $K$ preserves connected limits and is accessible.  Since (co)limits in $\Arr([J,\Set])$ are pointwise the functor $K$ preserves any (co)limits preserved by each of $$\Sigma_{i \in I}\cc(B_{i},-),\cc(A,-):\cc \to \Set$$ for $p=\{p_{i}:A \to B_{i}:i \in I\} \in J$.  Since $J$ is a set there exists a regular cardinal $\lambda$ such that the objects $A,B_{i}$ appearing in each cone $p$ are $\lambda$-presentable.  Since coproducts commute with colimits both of the above functors preserve $\lambda$-filtered colimits, whence so does $K$.

Now since $V$ has the isomorphism lifting property and both $V$ and $K$ preserve connected limits  it follows easily that the pullback $\Ainj(J)$ has such limits and colimits preserved by the pullback projections --- in particular preserved by $U$.  The isomorphism lifting property ensures that the square is a bipullback \cite{Joyal1993Pullbacks} and therefore, by Theorem 5.1.6 of \cite{Makkai1989Accessible}, the pullback $\Ainj(J)$ is accessible and the pullback projections accessible functors.

By a straightforward modification of the general adjoint functor theorem a functor between categories with connected limits has a right multiadjoint just when it satisfies the solution set condition and preserves connected limits (see \cite{Diers1981Some}).  By Proposition 6.1.2 of \cite{Makkai1989Accessible} each accessible functor satisfies the solution set condition.  It remains therefore to establish strict monadicity.  As a pullback of the strictly monadic $V$ the functor $U$ creates $U$-split coequalisers and so is strictly monadic by (the strict version of) Theorem 3.1 of \cite{Diers1980Multi}.\begin{footnote}{Theorem 3.1 of \cite{Diers1980Multi} concerns non-strict monadicity. The strict variant used here is a routine modification of its non-strict counterpart, just as for ordinary monads.}\end{footnote}
\end{proof}

Using the above result, we can give a novel proof that a small cone injectivity class in a locally presentable category is accessible and accessibly embedded, proceeding in much the same way as in the proof of Theorem~\ref{thm:Injectives}.  The full result, which appears as Theorem 4.17 of \cite{Adamek1994Locally}, is recorded below.

\begin{Theorem}\label{thm:ConeInjectives}\textnormal{(Ad{\'a}mek and Rosick{\'y})}
Let $\cc$ be locally presentable.  A full subcategory $j:\ca \hookrightarrow \cc$ is of the form $\Inj(J)$ for $J$ a set of cones if and only if $\ca$ is accessible and accessibly embedded.
\end{Theorem}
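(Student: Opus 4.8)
The plan is to prove both implications by mirroring the proof of Theorem~\ref{thm:Injectives}, replacing the use of Theorem~\ref{thm:AlgebraicInjectives} by Proposition~\ref{prop:AlgebraicConeInjectives}. The one conceptual change is that the category $\Ainj(J)$ of algebraic cone injectives is now merely locally multi-presentable --- hence accessible --- rather than locally presentable, and $U$ is a right multiadjoint rather than a right adjoint. The point I would stress is that the accessibility argument for $\Inj(J)$ uses only the accessibility of $\Ainj(J)$ together with the preservation properties of $U$, neither of which is disturbed by the multi-aspect; so the argument survives essentially verbatim.

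For the forward direction I would first choose $\lambda$ so that the source $A$ and every target $B_i$ of each cone $p=\{p_i:A\to B_i\}\in J$ is $\lambda$-presentable. Then $j:\Inj(J)\hookrightarrow\cc$ is closed under $\lambda$-filtered colimits: a map out of the $\lambda$-presentable source of a cone into a $\lambda$-filtered colimit of injectives factors through some stage, where the injectivity of that stage provides a factorisation through a cone leg, which then composes back up to the colimit. By Proposition~\ref{prop:AlgebraicConeInjectives}, $\Ainj(J)$ is accessible and $U:\Ainj(J)\to\cc$ is an accessible functor between accessible categories, so the uniformization theorem supplies $\mu\geq\lambda$ for which $\Ainj(J)$ and $\cc$ are $\mu$-accessible and $U$ preserves $\mu$-filtered colimits and $\mu$-presentable objects. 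Since $j$ preserves $\lambda$-filtered, hence $\mu$-filtered, colimits, any $X\in\Inj(J)$ with $jX$ $\mu$-presentable in $\cc$ is $\mu$-presentable in $\Inj(J)$. Taking $S$ to be the image under $U$ of a representative set of $\mu$-presentables of $\Ainj(J)$, each $X\in\Inj(J)$ underlies some $(X,x)\in\Ainj(J)$, which I write as a $\mu$-filtered colimit of $\mu$-presentables; applying the $\mu$-filtered-colimit-preserving $U$ exhibits $X$ as a $\mu$-filtered colimit in $\Inj(J)$ of objects of $S$. Thus $\Inj(J)$ is $\mu$-accessible and $\mu$-accessibly embedded.

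For the converse I would follow the converse of Theorem~\ref{thm:Injectives}, but with the single modification forced by the absence of products. Given $\ca$ accessible and accessibly embedded, pick $\mu$ (via uniformization) with $\ca,\cc$ $\mu$-accessible and $j$ preserving $\mu$-presentables and $\mu$-filtered colimits. Since $j$ is accessible it satisfies the solution set condition, so for each $\mu$-presentable $X\in\cc$ the comma category $X/j$ has a weakly initial set. In Theorem~\ref{thm:Injectives} closure under products collapsed this set to one weakly initial object; lacking products, I instead retain the whole weakly initial set --- after factoring each member through a $\mu$-presentable stage of a colimit presentation of its codomain --- as a cone $p_X=\{X\to jX^{\star}_t : t\in T_X\}$ with $\mu$-presentable codomains, and let $J$ be the set of these cones over a representative set of $\mu$-presentable $X$. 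Weak initiality shows each object of $\ca$ is $J$-injective. For the reverse inclusion, given $J$-injective $Y$ I consider its canonical $\mu$-filtered presentation $Pres_{\mu}(Y)$ and the full subcategory $\ck$ spanned by the maps factoring as $jX^{\star}_t\to Y$; cone injectivity ensures each $A\to Y$ with $A$ $\mu$-presentable factors through a leg of $p_A$, giving cofinality of $\ck\hookrightarrow Pres_{\mu}(Y)$, whence $Y$ is a $\mu$-filtered colimit of objects in the image of $j$ and so lies in $\ca$ by closure under $\mu$-filtered colimits.

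The main obstacle I anticipate is not computational but a matter of verifying that the multi-presentable machinery interacts cleanly with uniformization: I must confirm that the accessibility proof for $\Inj(J)$ invokes only the accessibility of $\Ainj(J)$ and the fact that $U$, although merely a right multiadjoint, is nonetheless an \emph{accessible} functor --- which is exactly what Proposition~\ref{prop:AlgebraicConeInjectives} delivers. Conceptually, the delicate point in the converse is that discarding product-closure is precisely compensated by passing from weakly initial objects to weakly initial sets, that is, to cones; the care lies in checking that this cone genuinely detects membership in $\ca$, which comes down to the cofinality of $\ck$ in $Pres_{\mu}(Y)$.
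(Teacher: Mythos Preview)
Your proposal is correct and matches the paper's intent. The paper does not actually write out a proof of Theorem~\ref{thm:ConeInjectives}: for the forward direction it only remarks that a novel proof can be given ``proceeding in much the same way as in the proof of Theorem~\ref{thm:Injectives}'' using Proposition~\ref{prop:AlgebraicConeInjectives}, and for the converse it simply cites Theorem~4.17 of \cite{Adamek1994Locally}. Your forward direction carries out precisely the argument the paper sketches, correctly noting that only the accessibility of $\Ainj(J)$ and of $U$ are needed, not local presentability or a genuine left adjoint. Your converse is the expected modification of the converse of Theorem~\ref{thm:Injectives} --- retaining the weakly initial \emph{set} as a cone rather than collapsing it via products --- and this is indeed how the result is proved in \cite{Adamek1994Locally}; the cofinality argument goes through unchanged since cofinality of a full subcategory of a filtered category needs only that every object maps into the subcategory.
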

By Theorem 4.1 of \cite{Rosicky2007On-combinatorial} if $\cc$ is combinatorial the full subcategory $W \hookrightarrow \Arr(\cc)$ is always accessible and accessibly embedded.  Combining this fact with the preceding result and Proposition~\ref{prop:AlgebraicConeInjectives} we obtain:

\begin{Theorem}\label{thm:CombCone}
Let $\cc$ be a combinatorial model category with class of weak equivalences $W$. Then
\begin{enumerate}
\item $W \hookrightarrow \Arr(\cc)$ is of the form $\Inj(J)$ for $J$ a set of cones;
\item There exists a multimonad $T$ on $\Arr(\cc)$ such that $f$ admits $T$-algebra structure if and only if $f$ is a weak equivalence.
\end{enumerate}
\end{Theorem}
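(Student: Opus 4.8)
The plan is to assemble pieces that are already in place, treating $\Arr(\cc)$ itself as the ambient locally presentable category. First I would record that $\Arr(\cc)=[\atwo,\cc]$ is locally presentable whenever $\cc$ is, since a functor category out of a small category into a locally presentable category is again locally presentable. This is exactly what licenses the cone-injectivity machinery developed above, stated for a locally presentable base, to be applied with base $\Arr(\cc)$.

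For (1), I would invoke Theorem 4.1 of \cite{Rosicky2007On-combinatorial}, which for combinatorial $\cc$ guarantees that $W \hookrightarrow \Arr(\cc)$ is accessible and accessibly embedded. Feeding this into Theorem~\ref{thm:ConeInjectives}, applied to the locally presentable category $\Arr(\cc)$, immediately produces a set $J$ of cones in $\Arr(\cc)$ with $W=\Inj(J)$.

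For (2), I would pass from injectivity to algebraic injectivity exactly as in the proof of Corollary~\ref{cor:monad1}. By Proposition~\ref{prop:AlgebraicConeInjectives} the forgetful functor $U:\Ainj(J) \to \Arr(\cc)$ is a strictly monadic right multiadjoint; writing $T$ for the multimonad it induces, strict monadicity furnishes an isomorphism $\Ainj(J) \cong \TAlg$ over $\Arr(\cc)$. It then remains to check that a morphism $f$ lies in $W=\Inj(J)$ precisely when it underlies an object of $\Ainj(J)$, so that admitting $T$-algebra structure coincides with belonging to $W$.

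This last identification is the only point requiring care: from the bare injectivity of $f$ one must, for each cone $p=\{p_i:A \to B_i:i \in I\} \in J$ and each $g:A \to f$, select an index $c_1(p,g) \in I$ together with an extension $c_2(p,g)$, which invokes the axiom of choice just as in the footnote to Corollary~\ref{cor:monad1}. Everything else is formal, the substantive inputs being the already-established Proposition~\ref{prop:AlgebraicConeInjectives} together with the two cited theorems. I therefore expect no genuine obstacle beyond confirming that the hypotheses of Theorem~\ref{thm:ConeInjectives} and Proposition~\ref{prop:AlgebraicConeInjectives} apply verbatim to the base $\Arr(\cc)$.
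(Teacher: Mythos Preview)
Your proposal is correct and follows exactly the paper's approach: the paper simply combines Theorem~4.1 of \cite{Rosicky2007On-combinatorial} with Theorem~\ref{thm:ConeInjectives} and Proposition~\ref{prop:AlgebraicConeInjectives}, which is precisely what you do. Your additional remarks that $\Arr(\cc)$ is locally presentable and that the axiom of choice is needed to pass from injectivity to algebraic injectivity are useful points that the paper leaves implicit.
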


%
%


\subsection{Weak equivalences of simplicial sets as cone injectives}\label{section:WECones}

By Theorem~\ref{thm:CombCone} we know that the weak equivalences of simplicial sets form a cone injectivity class.  We will now, in fact, describe a countable set of cones generating the weak equivalences and extending the single cone of \eqref{eq:cone1} capturing surjectivity on $\Pi_0$.

Let $j_{n}:\partial \Delta^n \to \Delta^{n}$ be the inclusion of the boundary of the $n$-simplex.  We denote by $RH_n$ the pushout in 
\begin{equation}\label{eq:relhomotopy}
\cd{\partial \Delta^n \times \Delta^{1} \ar[d]_{j_{n} \times 1} \ar[r]^-{p_1} & \partial \Delta^n \ar[d] \\
\Delta^{n} \times \Delta^{1} \ar[r]^{} & \RH_{n}}
\end{equation}
since it classifies, by construction, homotopy relative to $\partial \Delta^n \to \Delta^{n}$.
Composing the isomorphism $\Delta^n \cong \Delta^n \times \Delta^0$ with the two maps $\Delta^0 \rightrightarrows \Delta^1$ corresponding to the $0$-simplices $0$ and $1$ of $\Delta^1$ produces a pair of maps $\Delta^n \rightrightarrows \Delta^n \times \Delta^1$.  Postcomposing these in turn with the morphism $\Delta^n \times \Delta^1 \to \RH_n$ of \eqref{eq:relhomotopy} produces a pair of maps $l_n,r_n:\Delta^{n} \rightrightarrows RH_{n}$ such that the square
$$
\cd{\partial \Delta^n \ar[d]_{j_{n}} \ar[r]^{j_{n}} & \Delta^{n} \ar[d]^{l_{n}}\\
\Delta^{n} \ar[r]_{r_{n}} & \RH_{n}}
$$
commutes. By Proposition 4.1 of \cite{Dugger2004Weak} a morphism $f:X \to Y$ of Kan complexes is a weak equivalence precisely when it is injective with respect to the set of morphisms $$\{\alpha_{n}=(j_{n},r_{n}):j_{n} \to l_{n}:n \in \mathbb N\}$$ in $\Arr(\SSet)$.

In constructing our generating cones Kan's fibrant replacement functor $Ex_{\infty}$ \cite{Kan1957On} plays an important role.  We will require an understanding of its construction and recall the relevant details now --- for more see \cite{Kan1957On, Goerss1999Simplicial}.  Non-degenerate $m$-simplices of $\Delta^n$ are in bijection with $(m+1)$-element subsets of $\{0,1,\ldots,n\}$ --- accordingly the set of non-degenerate simplices of $\Delta^n$ forms a poset, ordered by inclusion, whose nerve is by definition its subdivision $Sd \Delta^n$.  This construction extends to a functor $Sd:\Delta \to [\Delta^{op},\Set]$ which, by the Kan construction, extends along the Yoneda embedding to the left adjoint of an adjoint pair $Sd \dashv Ex:[\Delta^{op},\Set] \leftrightarrows [\Delta^{op},\Set]$.  The subdivision functor comes equipped with a natural map $p:Sd \to 1$ which, by adjointness, corresponds to a natural map $q:1 \to Ex$.  We write $Sd_n$ for the $n$-fold composite of $Sd$ and for $n>m$ with $p_{n,m}:Sd_n \to Sd_m$ denoting the composite of $p$-components; similarly $Ex_n$ and $q_{m,n}:Ex_m \to Ex_n$.

$Ex_{\infty}$ is defined as the colimit of the chain
\begin{equation*}
\cd{ 1 \ar[r]^{q_{0,1}} & Ex_{1} \ar[r]^{q_{1,2}} & Ex_{2} \ar[r]^{q_{2,3}} & Ex_3 \ar[r] & \ldots \ar[r] & Ex_{\infty}.}
\end{equation*}

As a fibrant replacement it has the property that a morphism $f:X \to Y$ is a weak equivalence just when $Ex_{\infty}f$ is a weak equivalence: that is, when $\{\alpha_{n}\}_{n \in \mathbb N} \perp Ex_{\infty}f$.  Now since $j_{n}$ is finitely presentable in $\Arr(\SSet)$ each morphism $j_{n} \to Ex_{\infty}f$ factors through a stage $Ex_{m}f$.  Using that $l_{n}$ is also finitely presentable we see that $\alpha_{n} \perp f$ if and only if for all $m \in \mathbb N$ and  $u:j_{n} \to Ex_{m}f$ there exists $k \geq m$ and  $u^\prime:j_{n} \to Ex_{k}f$ rendering commutative the square on the left below.
\begin{equation*}
\cd{
j_n \ar[d]_{\alpha_{n}} \ar[r]^-{u} & Ex_{m}f \ar[d]^{q_{m,k}}  &&& Sd_{k}j_n \ar[d]_{Sd_k\alpha_{n}} \ar[r]^-{p_{k,m}} & Sd_{m}j_n \ar[d]^{v} \\
l_n \ar@{.>}[r]^-{\exists u^{\prime}} & Ex_{k}f &&& Sd_{k}l_n \ar@{.>}[r]^{\exists v^{\prime}} & f}
\end{equation*}
By adjointness this is equally to say that for all $v:Sd_{m}j_{n} \to f$ there exists $k \geq m$ and a map $v^{\prime}:Sd_{k}l_{n} \to f$ such that the square above right commutes.  
Such a $v^{\prime}$ amounts to an extension of $v$ along the right vertical arrow in the pushout square below.
$$\cd{
Sd_{k}j_n \ar[d]_{Sd_k\alpha_{n}} \ar[r]^{p_{k,m}} & Sd_{m}j_n \ar[d]\\
Sd_{k}l_n \ar[r] & P_{m,n,k}}$$
Accordingly we see that $f$ is a weak equivalence just when for each pair $n,m \in \mathbb N$ $f$ is injective with respective to the cone
$$C_{n,m}=\{ Sd_mj_n \to P_{m,n,k}: k \geq m\}.$$
To describe this cone in more detail consider the following cube in which the top and bottom faces are pushouts.  The morphism $Sd_mj_n \to P_{m,n,k}$ of $C_{n,m}$ is given by the rightmost face, moving in the direction of the dotted arrows.
$$
\cd{
&& Sd_{m}\partial \Delta^n \ar[ddd] \ar@{.>}[dr] \\
Sd_{k}\partial \Delta^n \ar[ddd] \ar[dr] \ar[urr] &&& P^1_{n,m,k} \ar[ddd]\\
& Sd_{k}\Delta^n \ar[ddd] \ar[urr]\\
&& Sd_{m}\Delta^n \ar@{.>}[dr] \\
Sd_{k}\Delta^n \ar[dr] \ar[urr] &&& P^2_{n,m,k}\\
& Sd_{k}RH_n \ar[urr]}
$$

For a low dimensional example let $n=0$.  Now $\Delta^0$ and $\partial \Delta^0=\varnothing$ are fixed by $Sd$ whilst $\RH_0=\Delta^1$.  It follows that the right moving arrows on the back face of the cube are isomorphisms and, since the pushout of an isomorphism is an isomorphism, that the right face of the cube coincides with the left face --- in this case the square
$$
\cd{\varnothing \ar[d] \ar[r]^{} & \Delta^{0} \ar[d]\\
\Delta^{0} \ar[r] & Sd_{k}\Delta^1.}
$$   In fact, it is straightforward to show that $Sd_{k}\Delta^1$ is the generic zigzag
$$\cd{0 \ar[r] & 1 & 2 \ar[l] \ar[r] & 3 & \ldots \ar[l] \ar[r] & 2k-1 & 2k \ar[l]}$$
 of length $2k$ with the two maps $\Delta^0 \rightrightarrows Sd_{k}\Delta^1$ selecting the endpoints.  In particular $C_{0,0}$ is the cone \eqref{eq:cone1}. 

\section{From cone injectives and multimonads to injectives and monads}\label{section:Nikolaus}

We have seen that in order to describe the algebraic structure admitted by weak equivalences in a general combinatorial model category we must pass from the standard concepts of injectivity and monads to cone-injectivity and multi-monads.  On the other hand we now show that each combinatorial model category is Quillen equivalent to one in which all objects are fibrant --- in particular, in which the standard concepts suffice to capture the algebraic structure at hand.

Let $\cc$ be a combinatorial model with generating sets $I$ and $J$ of cofibrations and trivial cofibrations.  We define $\AFib = \Ainj(J)$ and, using the terminology of Nikolaus \cite{Nikolaus2011Algebraic}, refer to it the category of \emph{algebraically fibrant objects}.  In this case Theorem~\ref{thm:AlgebraicInjectives} ensures that $\AFib$ is locally presentable and $U:\AFib \to \cc$ a strictly monadic right adjoint.  

The two classes $(U^{-1}\WE,U^{-1}\Fib)$ in $\AFib$ consisting of the preimages of the weak equivalences and fibrations in $\cc$ specify the data for a Quillen model structure on $\AFib$ which, when the model category axioms are satisfied, we refer to as the projective model structure on $\AFib$.  

The first part of the following result modifies Theorem 2.20 of \cite{Nikolaus2011Algebraic}.  Although we require $\cc$ to be combinatorial rather than just cofibrantly generated, our result does not require the generating trivial cofibrations to be monomorphisms.  

The interesting feature of our argument, which is quite different to that of \emph{ibid.}, is that it involves the construction of a highly non-functorial path object.



\begin{Theorem}\label{thm:NikolausExtension}
Let $\cc$ be a combinatorial model category.  
\begin{enumerate}
\item The projective model structure on $\AFib$ exists, and is a combinatorial model structure with all objects fibrant.  The adjunction $F \dashv U:\AFib \leftrightarrows \cc$ is a Quillen equivalence.
\item The weak equivalences of algebraically fibrant objects form a small injectivity class.  In particular, there exists a monad $T$ on $\AFib$ such that $f:(A,a) \to (B,b)$ is a weak equivalence if and only if it bears $T$-algebra structure.
\end{enumerate}
\end{Theorem}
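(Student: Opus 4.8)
The plan is to obtain the projective model structure on $\AFib$ by transfer along $F \dashv U$, reducing its existence to a single acyclicity condition that I would verify by an explicit --- and necessarily non-functorial --- construction of path objects. Recall from Theorem~\ref{thm:AlgebraicInjectives} that $\AFib = \Ainj(J)$ is locally presentable and that $U$ is strictly monadic with left adjoint $F$; in particular $\AFib$ is complete, cocomplete, and the small object argument applies to the sets $F(I)$ and $F(J)$. The transfer theorem for cofibrantly generated model structures then yields the projective structure --- with weak equivalences and fibrations created by $U$, and generating cofibrations $F(I)$ and generating trivial cofibrations $F(J)$ --- as soon as the \emph{acyclicity condition} holds: every relative $F(J)$-cell complex lies in $U^{-1}(\WE)$. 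I note at the outset that every object of $\AFib$ is fibrant: for $(A,a) \in \AFib$ the underlying object $A$ is $J$-injective, hence fibrant in $\cc$, so that $(A,a) \to 1$ lies over a fibration and is therefore one.

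First I would reduce acyclicity to the existence of path objects. A relative $F(J)$-cell complex $i : (A,a) \to (B,b)$ has the left lifting property against $\rlp{F(J)}$, which is exactly the class of fibrations of the putative structure. Lifting $i$ against the fibration $(A,a) \to 1$ produces a retraction $r$ with $r i = 1$; lifting $i$ against the fibration $\mathrm{Path}(B,b) \to (B,b) \times (B,b)$ of a path object produces a right homotopy $H$ from $1$ to $ir$. Since both projections $d_0, d_1$ of the path object are weak equivalences, two-out-of-three forces $H$ --- and hence $ir = d_1 H$ --- to be a weak equivalence; and as $i$ is a retract of $ir$ in $\Arr(\AFib)$, via the maps $(i,1) : i \to ir$ and $(r,1) : ir \to i$, closure of weak equivalences under retracts gives that $i$ is a weak equivalence.

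The substance of the argument, and the step I expect to be the main obstacle, is the construction of path objects inside $\AFib$. For $(A,a) \in \AFib$ I would start from a factorisation $A \xrightarrow{w} PA \xrightarrow{p} A \times A$ of the diagonal in $\cc$ into a trivial cofibration followed by a fibration. The crucial observation --- which is what frees us from assuming the generating trivial cofibrations to be monomorphisms --- is that $w$ is automatically a split monomorphism, with retraction $s = \pi_1 p$, since $s w = \pi_1 \Delta = 1_A$. Using $s$ together with the product structure $a \times a$ on $A \times A$, I would equip $PA$ with an algebraic $J$-injective structure $\ell$ by setting, for a lifting problem given by $j : C \to D$ in $J$ and $g : C \to PA$,
$$\ell(j,g) = \begin{cases} w \cdot a(j, sg) & \text{if } w s g = g, \\ \text{a chosen filler over } (a \times a)(j, pg) & \text{otherwise},\end{cases}$$
the filler in the second clause existing because $j \boxslash p$. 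A direct check shows that $p$ becomes a morphism of algebraic injectives in both clauses, and that $w$ does too, since $g = wh$ forces $wsg = g$ and $sg = h$, whence $\ell(j, wh) = w \cdot a(j,h)$. As $U$ creates weak equivalences and fibrations, $(PA, \ell)$ with legs $w$ and $p$ is the desired path object; the arbitrary choices in the second clause make it highly non-functorial. This completes the verification of acyclicity and so the existence of the projective model structure.

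For the Quillen equivalence, $U$ preserves fibrations and weak equivalences by construction and hence is right Quillen, and it both preserves and reflects weak equivalences. By the standard recognition criterion, and since every object of $\AFib$ is fibrant, it then suffices to check that the unit $\eta_X : X \to UFX$ is a weak equivalence for every cofibrant $X$. In fact this holds for all $X$: by the construction \eqref{eq:rconstruction} of free algebras, $\eta_X$ is a relative $J$-cell complex, hence a trivial cofibration, while $UFX$ is $J$-injective and so fibrant.

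Finally, part (2) would follow by applying Theorem~\ref{thm:CombinatorialInj} to $\AFib$ itself. The transferred structure exhibits $\AFib$ as a combinatorial model category --- locally presentable by Theorem~\ref{thm:AlgebraicInjectives}, cofibrantly generated by $F(I)$ and $F(J)$ --- in which, as observed, every object is fibrant. Hence products of its weak equivalences are again weak equivalences, so the hypotheses of Theorem~\ref{thm:CombinatorialInj} are met, and it delivers both that the weak equivalences of $\AFib$ form a small injectivity class and that they are precisely the morphisms bearing $T$-algebra structure for a monad $T$ on $\AFib$.
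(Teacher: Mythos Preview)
Your overall strategy matches the paper's: the same non-functorial path object (your case split via $wsg=g$ is exactly the paper's ``$f$ factors through $p$'', since $w$ being split monic with retraction $s$ makes these equivalent), the same reduction via Quillen's path object argument, and the same appeal to Theorem~\ref{thm:CombinatorialInj} for part~(2).

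There is, however, a genuine gap in your argument for the Quillen equivalence. You assert that the unit $\eta_X:X\to UFX$ is a relative $J$-cell complex ``by the construction \eqref{eq:rconstruction}''. But \eqref{eq:rconstruction} builds only the pointed endofunctor $(R,\eta)$: the single step $X\to RX$ is indeed a pushout of a coproduct of $J$-maps, but $UFX$ is obtained from $X$ by the transfinite construction of Appendix~\ref{sect:Pointed}, whose successor stages are \emph{coequalisers} $RX_n\rightrightarrows RX_{n+1}\to X_{n+2}$. The connecting maps $j_{n+1}^{n+2}=x_{n+1}\circ\eta_{X_{n+1}}$ involve the coequaliser projection $x_{n+1}$, and nothing in that description shows the composite $X\to UFX$ to be a $J$-cell complex, or even to lie in $\llp{(\rlp{J})}$. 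The paper fills this gap by a short direct argument: given a lifting problem of $\eta_A$ against $f\in\rlp{J}$, pull $f$ back to a fibration $q:P\to UFA$, equip $P$ with an algebraic fibrant structure making $q$ a morphism of $\AFib$ (using the liftings of $UFA$ together with $q\in\rlp{J}$, exactly as in your second clause for the path object), and then the universal property of $FA$ yields a section of $q$ in $\AFib$ and hence the diagonal filler. Alternatively one may invoke Garner's algebraic small object argument, where the unit is built as a cell complex---but that is a different construction from the one in Appendix~\ref{sect:Pointed}, and you would need to cite it explicitly.
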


\begin{proof}
For (1) we start by observing that, by Theorem~\ref{thm:AlgebraicInjectives}, $\AFib$ is locally presentable.  Hence the sets $(FI,FJ)$ cofibrantly generate weak factorisation systems on $\AFib$ whose right classes are respectively $U^{-1}\Fib$ and $U^{-1}(\WE \cap \Fib)$.  Let $(C,c) \in \AFib$ and 
\begin{equation*}
\cd{
C \ar[r]^-{p} & PC \ar[r]^-{\langle s,t \rangle} & C^{2}
}
\end{equation*}
be a \emph{path object} factorisation in $\cc$: that is, a factorisation of the diagonal $\Delta:C \to C^{2}$ with $p$ a weak equivalence and $\langle s,t \rangle$ a fibration.  We will show that $PC$ can be equipped with the structure of an algebraically fibrant object $(PC,\phi)$ such that $p$ and $q$ lift to morphisms $p:(C,c) \to (PC,\phi)$ and $\langle s,t \rangle:(PC,\phi) \to (C,c)^{2}$ of algebraically fibrant objects.  By the dual of Proposition 2.2.1 of \cite{Hess2017A-necessary} --- a slight refinement of Quillen's path object argument --- the model structure will then exist.

The lifting function $\phi$ for $PC$ is defined in two stages.  Firstly, observe that since $\Delta:C \to PC$ is monic and $\Delta = \langle s, t \rangle \circ p$ we have that $p$ is monic too.  Now given a lifting problem $(j:A \to B \in J,f:A \to PC)$ suppose that $f$ factors through $p$ as $f^{\prime}:A \to C$ --- by monicity of $p$ the factorisation $f^{\prime}$ is unique.
\begin{equation*}
\cd{
A \ar[d]_{j} \ar@/^1.5pc/[rrr]^{f} \ar[rr]^{f^{\prime}} && C \ar[r]^-{p} & PC \\
B \ar[urr]|{c(j,f^{\prime})} \ar@/_1pc/[urrr]_{\phi(j,f)}
}
\end{equation*}
We then have the filler $c(j,f^{\prime})$ and define $\phi(j,f) = p \circ c(j,f^{\prime})$ as depicted above.  This definition ensures that $p:(C,c) \to (PC,\phi)$ is guaranteed to be a morphism of $\AFib$ independent of how we complete the definition of $\phi$.

If $f$ does not factor through $p$ we consider the composite $\langle s \circ f, t\circ f \rangle:A \to C^{2}$ and now use its lifting function to obtain an extension along $j$ as in the bottom horizontal arrow below. 
\begin{equation}\label{eq:lifting}
\cd{
A \ar[d]_{j} \ar[rrr]^{f} &&& PC \ar[d]^{\langle s,t \rangle} \\
B \ar[urrr]^{\phi(j,f)} \ar[rrr]_{\langle c(j,sf), c(j,tf)  \rangle } &&& C^{2}
}
\end{equation}
Then since $\langle s, t \rangle$ is a fibration there exists a diagonal filler and this defines $c(j,sf)$.  

We must prove that for general $f$ the equality
\begin{equation}\label{eq:preservation}
\langle s, t \rangle \circ \phi(j,f) = \langle c(j,s\circ f), c(j,t\circ f)  \rangle
\end{equation}
holds.  For $f$ not factoring through $p$ this is by construction.  If $f = p \circ f^{\prime}$ the left hand side of \eqref{eq:preservation} becomes $$\langle s, t \rangle \circ \phi(j,f) = \langle s, t \rangle \circ p \circ c(j,f^{\prime}) = \Delta \circ c(j,f^{\prime}) = \langle c(j,f^{\prime}), c(j,f^{\prime}) \rangle.$$  On the other hand the right hand side of \eqref{eq:preservation} becomes $$\langle c(j,s\circ f), c(j,t\circ f) \rangle = \langle c(j,s\circ p\circ f^{\prime}), c(j,t\circ p \circ f^{\prime}) \rangle = \langle c(j,f^{\prime}), c(j,f^{\prime}) \rangle$$
as required, where the last step uses that $s\circ p=1$ and $t\circ p=1$.   

Accordingly we obtain the model structure and, since $U$ preserves fibrations and weak equivalences, the adjunction is a Quillen adjunction. Moreover, since $U$ reflects fibrations and each object in its image is fibrant, it follows that all objects in $\AFib$ are fibrant. 

Let us show that the unit component $\eta_{A}:A \to UFA$ belongs to $\llp(\rlp{J})$.  This follows directly from Garner's work on algebraic weak factorisation systems \cite{Garner2011Understanding} on observing that $UF$ is the \emph{fibrant replacement monad} associated to the algebraic weak factorisation system on $\cc$ freely generated by the inclusion $J \to \Arr(\cc)$.  For completeness we give a short elementary argument.  Consider a lifting problem as in the outside of the diagram below.
\begin{equation*}
\cd{
A \ar@{.>}[r]^{k} \ar[d]_{\eta_{A}} \ar@/^1.5pc/[rr]^{r} & P \ar[dl]^-{q \in \rlp{J}} \ar[r]^{p} & X \ar[d]^{f \in \rlp{J}} \\
UFA \ar[rr]_{s} && Y
}
\end{equation*}
Since $f \in \rlp{J}$ so is its pullback $q$; combining its lifting property with that of $UFA$ (in the style of \eqref{eq:lifting} above) we can equip $P$ with the structure of an algebraically fibrant object $(P,p)$ such that $q:(P,p) \to FA \in \AFib$.  The map to the pullback $k:A \to P=U(P,p)$ then induces a unique morphism $l:FA \to (P,p) \in \AFib$ with $l \circ \eta_{A} = k$.  Since $q:(P,p) \to FA \in \AFib$ the universal property also ensures that $q \circ l = 1$.  The composite diagonal $p \circ l:UFA \to P \to X$ then gives the desired filler.  Therefore the unit of the adjunction $\eta_{A}:A \to UFA$ belongs to $\llp{(\rlp{J})}$  and so is a weak equivalence.  Since $U\epsilon_{(C,c)} \circ \eta_{U(C,c)} = 1$ three for two ensures that $U\epsilon_{(C,c)}$ is a weak equivalence in $\cc$.  Therefore $\epsilon_{(C,c)}$ is a weak equivalence in $\AFib$ and the adjunction a Quillen equivalence.

It remains to prove (2).  Since each object of $\AFib$ is fibrant in the projective model structure and since the model structure is combinatorial, this follows immediately from Theorem~\ref{thm:CombinatorialInj}.
\end{proof}

\appendix

\section{Free algebras for pointed endofunctors}\label{sect:Pointed}

In order to make the proof of Theorem~\ref{thm:AlgebraicInjectives} accessible to a broader audience we now describe in detail the construction of free algebras for pointed endofunctors.  The classical reference is \cite{Kelly1980A-unified}, specifically Theorems~14.3 and 15.6.  Here we take a different approach to essentially the same result.  Our approach is based upon, and is a straightforward modification of, Koubek and Reiterman's elegant construction of the free algebra on an endofunctor \cite{Koubek1979Categorical}.  One of the attractive features of this approach is that it emphasises the explicit formulae involved --- see Proposition~\ref{prop:free} below --- by focusing not only on the free algebra but also on the \emph{free algebraic chain}.

To begin with, a chain is a functor $X:\Ord \to \C$ on the posetal category of ordinals, whilst a chain map is a natural transformation.   Given a pointed endofunctor $(T,\eta)$ on $\C$ an algebraic chain $(X,x)$ is a chain $X$ together with, for each ordinal $n$, a map $x_n:TX_n \to X_{n+1}$ satisfying
\begin{itemize}
\item for all $n$
\begin{equation}\label{eq:unit}
\xymatrix{
X_{n} \ar[drr]_{j_{n}^{n+1}} \ar[rr]^{\eta_{X_{n}}} && TX_{n} \ar[d]^{x_{n}} \\
 && X_{n+1}
}
\end{equation}
\item and for all $n < m$ the diagram 
\begin{equation}\label{eq:alg}
\xymatrix{
TX_{n} \ar[d]_{x_{n}} \ar[rr]^{T(j_{n}^{m})} && TX_{m} \ar[d]^{x_{m}} \\
X_{n+1} \ar[rr]_{j_{n+1}^{m+1}} && X_{m+1}
}
\end{equation}
commutes.
\end{itemize}
A morphism $f:(X,x) \to (Y,y)$ of algebraic chains is a chain map that commutes with the $x_n$ and $y_n$ for all $n$.  These are the morphisms of the category  $\TAlg_\infty$ of algebraic chains.

\begin{example}
Let $J$ be a set of morphisms in $\C$.  In Section ~\ref{section:AInj} we described the pointed endofunctor $(R,\eta)$ whose algebras are algebraic injectives.  Using the construction of $R$ in \eqref{eq:rconstruction} we see that an algebraic chain is a chain $X$ together with, for each lifting problem $(\alpha:A \to B \in J, f:A \to X_{n})$, a filler $x_{n}(\alpha,f)$ rendering the left square below commutative.
\begin{equation*}
\xymatrix{
A \ar[d]_{\alpha} \ar[rr]^{f} && X_{n}\ar[d]^{j_n^{n+1}} \ar[rr]^{j_n^m} && X_m \ar[d]^{j_m^{m+1}} \\
B \ar@/_1.2pc/[rrrr]_{x_{m}(\alpha,j_{n}^{m} \circ f)} \ar[rr]^{x_{n}(\alpha,f)} && X_{n+1} \ar[rr]^{j_{n+1}^{m+1}} && X_{m+1}
}
\end{equation*}
These fillers must satisfy the indicated compatibility for $n<m$.
\end{example}

There is a forgetful functor $V:\TAlg_\infty \to \C$ sending $(X,x)$ to $X_0$.  Our first goal is to show that if $\C$ is cocomplete then $V$ has a left adjoint.  

To this end we first observe that the equation \eqref{eq:alg} holds for all $n < m$ if it does so in the cases (a) $m=n+1$ and (b) $m$ is a limit ordinal.  Now consider a chain $X$ equipped with maps $x_{n}:TX_n \to X_{n+1}$ satisfying \eqref{eq:unit}.  Then case (a) of \eqref{eq:alg} becomes the assertion that for all $n$ the diagram
\begin{equation}
\xymatrix{
TX_{n} \ar@<.6ex>[rr]^{Tx_{n} \circ T\eta_{X_{n}}}\ar@<-.6ex>[rr]_{Tx_{n} \circ \eta_{TX_{n}}} && TX_{n+1} \ar[r]^{x_{n+1}} & X_{n+2}
}
\end{equation}
is a fork.  Case (b) of \eqref{eq:alg} asserts that for all limit ordinals $m$ and $n<m$ the diagram
\begin{equation*}
\xymatrix{
TX_{n} \ar@<0.5ex>[rr]^{ T{j_n^m}} \ar@<-0.5ex>[rr]_{ \eta_{X_{m}} \circ j_{n+1}^{m} \circ x_{n}} && TX_{m} \ar[r]^{x_{m}} & X_{m+1}
}
\end{equation*}
is a fork.  To see this, use that $x_{m} \circ \eta_{X_{m}} = j_{m}^{m+1}$.  In the presence of filtered colimits this equally asserts that for each limit ordinal $m$ the diagram 
\begin{equation}
\xymatrix{
col_{n<m}TX_{n} \ar@<0.5ex>[rr]^{\langle T{j_n^m} \rangle} \ar@<-0.5ex>[rr]_{\langle \eta_{X_{m}} \circ j_{n+1}^{m} \circ x_{n} \rangle} && TX_{m} \ar[r]^{x_{m}} & X_{m+1}
}
\end{equation}
is a fork.

\begin{Proposition}\label{prop:free}
If $\C$ is cocomplete then $V$ has a left adjoint whose value at $X \in \C$ is the algebraic chain $X_{\bullet}$ with values:
\begin{itemize}
\item $X_{0}=X$, $X_{1}=TX$, $j_{0}^{1}=\eta_{X}:X \to TX$ and $x_0=1:TX \to TX$.
\item At an ordinal of the form $n+2$ the object $X_{n+2}$ is the coequaliser
\begin{equation*}
\xymatrix{
TX_{n} \ar@<.6ex>[rr]^{Tx_{n} \circ T\eta_{X_{n}}}\ar@<-.6ex>[rr]_{Tx_{n} \circ \eta_{TX_{n}}} && TX_{n+1} \ar[r]^{x_{n+1}} & X_{n+2}
}
\end{equation*}
with $j_{n+1}^{n+2}=x_{n+1} \circ \eta_{X_{n+1}}$.
\item At a limit ordinal $m$,
\begin{itemize} 
\item $X_{m} = col_{n < m} X_{n}$ with the connecting maps $j_n^m$ the colimit inclusions.  
\item $X_{m + 1}$ is the coequaliser
\begin{equation*}
\xymatrix{
col_{n<m}TX_{n} \ar@<0.5ex>[rr]^{\langle T{j_n^m} \rangle} \ar@<-0.5ex>[rr]_{\langle \eta_{X_{m}} \circ j_{n+1}^{m} \circ x_{n} \rangle} && TX_{m} \ar[r]^{x_{m}} & X_{m+1}
}
\end{equation*}
with $j_{m}^{m+1}=x_{m} \circ \eta_{X_{m}}$.
\end{itemize}
\end{itemize}
\end{Proposition}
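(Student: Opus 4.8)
The plan is to establish two things about the chain $X_{\bullet}$ described in the statement: first that it is a genuine algebraic chain, and second that it is the free one on $X$, the unit of the adjunction being the identity $X \to X_0 = X$, so that the universal property to check is that every $f\colon X \to Y_0$ extends uniquely to a morphism of algebraic chains $X_\bullet \to (Y,y)$ with component $f$ at stage $0$. Cocompleteness of $\C$ guarantees that all the coequalisers and the limit-stage colimits in the recursive definition exist, so $X_\bullet$ is well-defined by transfinite recursion on $\Ord$. I would stress at the outset that, unlike the free algebra on an endofunctor, this construction needs no convergence or smallness hypothesis: we build the entire chain rather than seek a fixed point.

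That $X_\bullet$ is an algebraic chain is nearly immediate from the reductions recorded before the statement. The unit equations \eqref{eq:unit} hold by the very definitions of the connecting maps $j_n^{n+1}$ ($\eta_X$ at stage $0$, and $x_{n+1}\circ\eta_{X_{n+1}}$ and $x_m\circ\eta_{X_m}$ at successor and limit stages). For \eqref{eq:alg} I would invoke the observation, already made, that it suffices to check the cases $m = n+1$ and $m$ a limit ordinal, and that in these two cases \eqref{eq:alg} is equivalent to the assertion that the displayed successor fork, respectively limit fork, is coequalised by $x_{n+1}$, respectively $x_m$. Since these maps are \emph{defined} to be precisely those coequalising maps, the forks are coequalised by construction and \eqref{eq:alg} holds.

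The substance of the proof is the universal property. Given an algebraic chain $(Y,y)$ and $f\colon X \to Y_0$, I would construct the unique algebraic-chain morphism $g\colon X_\bullet \to (Y,y)$ with $g_0 = f$ by transfinite recursion, maintaining simultaneously the invariants that $g$ is a chain map and that $g_{n+1}\circ x_n = y_n\circ Tg_n$ at every stage treated so far (the chain-map conditions then following from \eqref{eq:unit} for $X$ and $Y$ together with naturality of $\eta$). The component $g_1 = y_0\circ Tf$ is forced since $x_0 = 1$; at a limit ordinal $m$ one takes $g_m$ to be the map out of $X_m = col_{n<m}X_n$ induced by the compatible cocone with legs $j_n^m\circ g_n$ in $Y$, which is unique by the colimit universal property. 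The delicate stages are the successor ones: here $X_{n+2}$ (and likewise $X_{m+1}$ for $m$ a limit) is a coequaliser with epic comparison map, so $g_{n+2}$ is forced and exists precisely when $y_{n+1}\circ Tg_{n+1}$ coequalises the defining fork.

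The hard part, and the point where the algebraic structure of the \emph{target} is used essentially, is exactly this coequalising check. Chasing the two legs of the successor fork through $y_{n+1}\circ Tg_{n+1}$, using naturality of $\eta$, the unit equation $y_n \circ \eta_{Y_n} = j_n^{n+1}$ for $Y$, and the inductive hypothesis $g_{n+1}\circ x_n = y_n\circ Tg_n$, I expect the required equality to reduce to $y_{n+1}\circ Ty_n\circ T\eta_{Y_n} = y_{n+1}\circ Ty_n\circ \eta_{TY_n}$, which is exactly the successor case of \eqref{eq:alg} for $(Y,y)$; the limit-successor stage reduces in the same way to the limit case of \eqref{eq:alg} for $(Y,y)$. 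Thus the target's own algebra equations furnish precisely the fork-coequalising identities that let the recursion proceed, and epicness of the coequalisers together with the colimit universal property force uniqueness at every stage, hence uniqueness of $g$. Assembling these recursions exhibits $X_\bullet$ as the value at $X$ of a left adjoint to $V$. I anticipate the main obstacle to be purely organisational: arranging the transfinite bookkeeping so that the two successor-stage factorisation checks align cleanly with cases (a) and (b) of \eqref{eq:alg} for the target.
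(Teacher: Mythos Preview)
Your proposal is correct and follows essentially the same approach as the paper: the unit is the identity, and the unique extension $g$ is built by transfinite recursion, with $g_1 = y_0 \circ Tf$ forced, the limit stages handled by the colimit universal property, and the successor stages handled by showing that $y_{n+1}\circ Tg_{n+1}$ coequalises the defining fork because the target $(Y,y)$ satisfies the corresponding fork equation. You are slightly more explicit than the paper in two respects---you spell out why $X_\bullet$ itself is an algebraic chain and you unwind the coequalising check down to the equation $y_{n+1}\circ Ty_n\circ T\eta_{Y_n} = y_{n+1}\circ Ty_n\circ \eta_{TY_n}$, whereas the paper simply observes that the back squares serially commute and the bottom row is a fork---but this is the same argument.
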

\begin{proof}
The unit of the adjunction will be the identity --- so, we are to show that given $f:X \to Y_{0}=V(Y,y)$ there exists a unique map $f:X_\bullet \to (Y,y)$ of algebraic chains with $f_{0}=f$.  The required commutativity below left
\begin{equation*}
\xymatrix{
TX \ar[d]_{Tf} \ar[r]^{x_{0}=1} & TX \ar[d]^{f_1} \\
TY_0 \ar[r]^{y_0} & Y_{1}}
\hspace{1cm}
\xymatrix{
TX_{n} \ar[d]^{Tf_{n}} \ar@<.6ex>[rr]^{Tx_{n} \circ T\eta_{X_{n}}}\ar@<-.6ex>[rr]_{Tx_{n} \circ \eta_{TX_{n}}} && TX_{n+1} \ar[d]^{Tf_{n+1}} \ar@{.>}[r]^{x_{n+1}} & X_{n+2} \ar@{.>}[d]^{f_{n+2}} \\
TY_{n} \ar@<.6ex>[rr]^{Ty_{n} \circ T\eta_{Y_{n}}}\ar@<-.6ex>[rr]_{Ty_{n} \circ \eta_{TY_{n}}} && TY_{n+1} \ar[r]^{y_{n+1}} & Y_{n+2}
}
\end{equation*}
forces us to set $f_{1}=y_{0} \circ Tf$.  The map $f_{n+2}$ must render the right square in the diagram above right commutative.  But since the two back squares serially commute and the bottom row is a fork there exists a unique map from the coequaliser $X_{n+2}$ rendering the right square commutative.  This uniquely specifies $f_n$ for $n < \omega$.  At a limit ordinal $m$, $f_m:X_{m}=col_{n < m} X_{n} \to Y_{m}$ is the unique map from the colimit commuting with the connecting maps --- which it must do to form a morphism of chains.  At the successor of a limit ordinal $m$ there is a unique map $f_{m+1}:X_{m+1} \to Y_{m+1}$ from the coequaliser satisfying $f_{m+1} \circ x_{m} = y_{m} \circ Tf_{m}$, as required.
\end{proof}

The usual forgetful functor $U:\TAlg \to \C$ factors through $V:\TAlg_\infty \to \C$ via a functor $\Delta:\Alg \to \Alg_\infty$: this sends $(X,x)$ to the constant chain on $X$ equipped with $x_n=x$ for all $n$.  A chain $X$ is said to stabilise at an ordinal $n$ if for all $m>n$ the map $j_{n,m}:X_n \to X_m$ is invertible.  Observe that if an algebraic chain $(X,x)$ stabilises at $n$ then $X_{n}$ equipped with the $T$-algebra structure
\begin{equation}\label{eq:structure}
(j_n^{n+1})^{-1} \circ x_n:TX_n \to X_{n+1} \cong X_n
\end{equation} is a reflection of $(X,x)$ along $\Delta$.  In particular:
\begin{Proposition}
If $X_{\bullet}$ stabilises at $n$ then $X_{n}$, with structure map as in \eqref{eq:structure}, is the free $T$-algebra on $X$.
\end{Proposition}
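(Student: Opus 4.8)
The plan is to obtain the free $T$-algebra on $X$ by composing two universal properties already established above. Since the forgetful functor factors as $U = V \circ \Delta$, with $V \colon \TAlg_\infty \to \C$ sending an algebraic chain to its zeroth term and $\Delta \colon \TAlg \to \TAlg_\infty$ sending a $T$-algebra to the associated constant chain, the free $T$-algebra on $X$ should be computed by first forming the free algebraic chain $X_\bullet$ on $X$ and then reflecting it along $\Delta$. The hypothesis that $X_\bullet$ stabilises at $n$ is precisely what renders this reflection explicit, identifying it with $X_n$ under the structure map of \eqref{eq:structure}.

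Concretely, I would fix an arbitrary $T$-algebra $(A,a)$ and assemble a bijection, natural in $(A,a)$,
$$\TAlg\bigl(X_n,(A,a)\bigr) \;\cong\; \C\bigl(X, U(A,a)\bigr),$$
which is exactly the universal property exhibiting $X_n$, equipped with the structure map of \eqref{eq:structure}, as the free $T$-algebra on $X$. The first step invokes the observation immediately preceding: as $X_\bullet$ stabilises at $n$, its reflection along $\Delta$ is $X_n$ with the structure map of \eqref{eq:structure}, yielding a natural bijection $\TAlg(X_n,(A,a)) \cong \TAlg_\infty(X_\bullet, \Delta(A,a))$. The second step applies Proposition~\ref{prop:free}, which supplies $\TAlg_\infty(X_\bullet,(Y,y)) \cong \C(X, Y_0)$ naturally in $(Y,y)$; taking $(Y,y) = \Delta(A,a)$ and using $V\Delta = U$, so that $Y_0 = U(A,a)$, this reads $\TAlg_\infty(X_\bullet, \Delta(A,a)) \cong \C(X, U(A,a))$. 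Composing the two bijections delivers the displayed isomorphism, and naturality is inherited from that of its two constituents.

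The substance of the argument sits entirely in the first step, namely in the reflection observation itself, and this is where I expect any real work to lie. One must verify that a morphism of algebraic chains $X_\bullet \to \Delta(A,a)$ amounts to a single $T$-algebra morphism $X_n \to (A,a)$, freely chosen. Stabilisation is the crux: because every connecting map $j^{m'}_m$ with $m,m' \ge n$ is invertible and the target chain is constant, each component of such a morphism is forced by its $n$-th component $g_n \colon X_n \to A$, while the compatibility of the chain map with the structure maps $x_m$ and $a$ collapses --- again using stabilisation together with the definition \eqref{eq:structure} --- to the lone requirement that $g_n$ commute with the two $T$-algebra structures. A subsidiary point, settled at once by the unit law \eqref{eq:unit}, is that $(j^{n+1}_n)^{-1} \circ x_n$ genuinely satisfies the $T$-algebra unit axiom, so that $X_n$ is a $T$-algebra in the first place.
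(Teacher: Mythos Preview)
Your proposal is correct and is precisely the argument the paper intends: the paper does not give a separate proof but merely writes ``In particular'' after the observation that a stabilised algebraic chain admits a reflection along $\Delta$, and your proof spells out exactly this composition of the reflection with the adjunction of Proposition~\ref{prop:free} via the factorisation $U = V \circ \Delta$. Your elaboration of the reflection step---that a chain map into a constant chain is determined by its $n$-th component, and that the compatibility conditions collapse via \eqref{eq:alg} and stabilisation to the single $T$-algebra homomorphism condition---is the routine verification the paper's ``Observe'' signals.
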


Accordingly we examine circumstances under which each $X_{\bullet}$ stabilises.  In the following the term \emph{chain of length $n$} refers to a functor $X:\Ord_{<n} \to \C$ from the full subcategory of ordinals less than $n$. 

\begin{Proposition}\label{prop:stab}
If $T$ preserves the colimit $X_{m}=col_{n<m}X_{n}$ for $m$ a limit ordinal then $X_{\bullet}$ stabilises at the ordinal $m$.
\end{Proposition}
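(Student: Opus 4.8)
The plan is to isolate the one essential isomorphism and then propagate it by transfinite induction. The hypothesis that $T$ preserves the colimit $X_m = col_{n<m}X_n$ says exactly that the canonical comparison $\alpha = \langle Tj_n^m\rangle \colon col_{n<m}TX_n \to TX_m$ — which is precisely the top leg of the coequaliser presenting $X_{m+1}$ — is invertible. Writing $\beta = \langle \eta_{X_m}\circ j_{n+1}^m\circ x_n\rangle$ for the bottom leg, so that $x_m \colon TX_m \to X_{m+1}$ is the coequaliser of $(\alpha,\beta)$, the heart of the matter is to prove that the single connecting map $j_m^{m+1} = x_m\circ \eta_{X_m}$ is invertible.

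First I would build a candidate inverse. Since $\alpha$ is invertible, I may define $g \colon TX_m \to X_m$ as the unique map with $g\circ Tj_n^m = j_{n+1}^m\circ x_n$ for every $n<m$; the compatibility of this family under the coprojections is exactly equation \eqref{eq:alg}. A componentwise check using \eqref{eq:unit} then gives $g\circ \eta_{X_m} = 1_{X_m}$, and a second componentwise check — now using $g\circ \eta_{X_m}=1$ — shows that $g$ coequalises $(\alpha,\beta)$. Hence $g$ factors as $g = r\circ x_m$ for a unique $r\colon X_{m+1}\to X_m$. Then $r\circ j_m^{m+1} = g\circ \eta_{X_m} = 1$; for the reverse equation I would use that the coequaliser $x_m$ is epic, reducing $j_m^{m+1}\circ r = 1$ to the identity $j_m^{m+1}\circ g = x_m$, which I would verify by precomposing with the jointly epic family $(Tj_n^m)_{n<m}$ and invoking the coequaliser relation $x_m\circ\alpha = x_m\circ\beta$. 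This establishes that $j_m^{m+1}$ is invertible.

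For the stabilisation I would set $a = (j_m^{m+1})^{-1}\circ x_m$, which is a $(T,\eta)$-algebra structure on $X_m$ by \eqref{eq:unit}, and prove by transfinite induction on $m'\ge m$ that $j_m^{m'}$ is invertible. At a limit ordinal $m'$ the tail $[m,m')$ is cofinal and, by the inductive hypothesis together with cancellation, consists of isomorphisms; its colimit is therefore $X_m$ and $j_m^{m'}$ is invertible. At a successor $m' = k+1$ it suffices, since $j_m^{k+1} = j_k^{k+1}\circ j_m^k$ with $j_m^k$ invertible, to show $j_k^{k+1}$ invertible. Here I would transport the algebra along the isomorphism $j_m^k$, setting $b = j_m^k\circ a\circ (Tj_m^k)^{-1}$; naturality of $\eta$ and $a\circ\eta=1$ give $b\circ\eta_{X_k}=1$, equation \eqref{eq:alg} gives $x_k = j_k^{k+1}\circ b$, and the same style of componentwise argument as above shows that $b$ coequalises the pair presenting $X_{k+1}$. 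Exactly as in the previous paragraph this produces a two-sided inverse of $j_k^{k+1}$.

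The hard part will be the verifications that the candidate maps $g$ and $b$ coequalise the relevant parallel pairs and that the induced retractions are genuinely two-sided inverses; the reverse inverse identity really uses that $x_m$ (respectively $x_k$) is a coequaliser, via $x_m\circ\alpha = x_m\circ\beta$, rather than merely an epimorphism. A minor additional nuisance is that the successor step of the induction must treat two different shapes of defining coequaliser — according as $k$ is a successor or a limit ordinal — but both reduce to the same componentwise computation built from \eqref{eq:unit}, \eqref{eq:alg}, naturality of $\eta$, and cofinality of the tail.
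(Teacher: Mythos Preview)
Your proposal is correct and follows essentially the same route as the paper's proof: first show that $j_m^{m+1}$ is invertible by producing the map $g\colon TX_m\to X_m$ induced by the cocone $(j_{n+1}^m\circ x_n)_{n<m}$ and checking it has the universal property of the coequaliser defining $X_{m+1}$, then propagate by transfinite induction using the coequaliser formulae. The paper compresses your first paragraph into the single observation that the $x_n$ constitute a morphism of chains of length $m$, so that the induced map between colimits (your $g$) already satisfies the coequaliser universal property; and it phrases the transfinite step as ``$j_k^{k+1}$ invertible $\Rightarrow$ $j_{k+1}^{k+2}$ invertible'' rather than via transporting the algebra structure $a$ along $j_m^k$ --- but these amount to the same computation, and your more explicit treatment of the two coequaliser shapes (successor vs.\ limit $k$) is exactly what one must verify to justify the paper's terse assertion.
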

\begin{proof}
Firstly one shows that $j_m^{m+1}:X_{m} \to X_{m+1}$ is invertible.  To see this observe that the morphisms $x_{n}:TX_n \to X_{n+1}$ form a morphism of chains of length $m$, and so induce a map $x_m:TX_m \to X_m$ between the colimits.  This has the universal property of the coequaliser $x_m:TX_{m} \to X_{m+1}$ whereby the comparison $j_m^{m+1}$ between the two coequalisers is invertible.

Now the coequaliser formulae allow to us to prove that if for some $k$ the map $j_k^{k+1}$ is invertible then so is $j_{k+1}^{k+2}$ and, likewise, that if $j_k^l$ is invertible for $k < l$ with $l$ a limit ordinal then $j_l^{l+1}$ is invertible.  Given that $j_m^{m+1}$ is invertible it easily follows from these facts, using transfinite induction, that each $j_m^n$ is invertible for all $n>m$.
\end{proof}


\begin{Theorem}
Let $(T,\eta)$ be a pointed endofunctor on a cocomplete category $\C$.  If either
\begin{enumerate}
\item $T$ preserves colimits of $n$-chains for some limit ordinal $n$, or
\item $\C$ is equipped with a well copowered proper factorisation system $(\ce,\cm)$ such that $T$ preserves colimits of $\cm$-chains of length $n$ for some limit ordinal $n$.
\end{enumerate}
Then free $T$-algebras exist: namely, each algebraic chain $X_{\bullet}$ stabilises and its point of stabilisation, with algebra structure as in \eqref{eq:structure}, is the free $T$-algebra on $X$.
\end{Theorem}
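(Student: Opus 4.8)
The plan is to reduce both cases to a single condition and then apply Proposition~\ref{prop:stab}. Combining Proposition~\ref{prop:stab} with the reflection result established just above it --- that a stabilising algebraic chain computes the free $T$-algebra via \eqref{eq:structure} --- it suffices to exhibit a limit ordinal $m$ at which $T$ preserves the colimit $X_m = col_{k<m} X_k$ of the free algebraic chain $X_\bullet$ of Proposition~\ref{prop:free}: for then $X_\bullet$ stabilises at $m$ and $X_m$ is the free algebra on $X$. Case (1) is immediate, since the restriction of $X_\bullet$ to the ordinals below $n$ is a chain of length $n$, whence $T$ preserves $X_n = col_{k<n}X_k$ and we take $m=n$.

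For case (2) the free algebraic chain need not be an $\cm$-chain, so the hypothesis does not apply to it directly. The idea is that $X_\bullet$ becomes an $\cm$-chain \emph{cofinally}. Suppose we can find a strictly increasing family $(\lambda_i)_{i<n}$ of ordinals with $j_{\lambda_i}^{\lambda_{i'}} \in \cm$ whenever $i \leq i' < n$, and set $m = \sup_{i<n}\lambda_i$, a limit ordinal since $n$ is one. Then $(X_{\lambda_i})_{i<n}$ is an $\cm$-chain of length $n$, cofinal in the diagram $(X_k)_{k<m}$, so that $X_m = col_{i<n}X_{\lambda_i}$; the hypothesis gives $TX_m = col_{i<n}TX_{\lambda_i}$, and cofinality of $(\lambda_i)$ in $m$ identifies the latter with $col_{k<m}TX_k$. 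Thus $T$ preserves $X_m = col_{k<m}X_k$, as required.

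It remains to construct the $\lambda_i$, and here the factorisation system and well copoweredness enter. Factor each connecting map $j_\gamma^\delta = m_\gamma^\delta \circ e_\gamma^\delta$ through its $(\ce,\cm)$-image $I_\gamma^\delta$. A diagonal-fill argument, using that $\ce$ satisfies the cancellation $gf, f \in \ce \Rightarrow g \in \ce$, shows that for fixed $\gamma$ the $I_\gamma^\delta$ form a chain of $\ce$-quotients of $X_\gamma$ with connecting maps in $\ce$; as $\C$ is well copowered this chain is eventually constant, stabilising at some ordinal $\sigma(\gamma)$. Call a limit ordinal $\lambda$ a \emph{closure point} if $\sigma(\gamma) < \lambda$ for all $\gamma < \lambda$; iterating $\sigma$ and passing to suprema shows that closure points are cofinal, so the $\lambda_i$ can be chosen among them. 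The key claim is that at a closure point $\lambda$ every $j_\lambda^\delta$ with $\delta \geq \lambda$ lies in $\cm$; applied at $\lambda_i$ with $\delta = \lambda_{i'} \geq \lambda_i$ this delivers $j_{\lambda_i}^{\lambda_{i'}} \in \cm$.

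To prove the claim one shows that images commute with the relevant $\cm$-chain unions: writing $X_\lambda = col_{\gamma<\lambda}X_\gamma$, for $\delta \geq \lambda$ the subobject $I_\lambda^\delta \hookrightarrow X_\delta$ is the union $\bigcup_{\gamma<\lambda} I_\gamma^\delta$ of the $\cm$-subobjects $I_\gamma^\delta$. At a closure point one has $I_\gamma^\lambda \cong I_\gamma^\delta$ compatibly in $\gamma$, so the $\ce$-part $e_\lambda^\delta \colon X_\lambda = \bigcup_\gamma I_\gamma^\lambda \to \bigcup_\gamma I_\gamma^\delta = I_\lambda^\delta$ is a colimit of isomorphisms and hence invertible, forcing $j_\lambda^\delta \in \cm$. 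I expect this interaction of the $(\ce,\cm)$-factorisation with transfinite colimits to be the main obstacle: it is exactly why properness (so that the $I_\gamma^\delta$ are genuine subobjects admitting well-behaved unions, which exist in the setting of Conditions~\ref{thm:size}) and well copoweredness (to stabilise the $\ce$-quotient chains) are hypothesised. The remaining bookkeeping with closure points and cofinality is then routine.
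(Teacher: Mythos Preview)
Your proposal is correct and follows the same overall strategy as the paper: both reduce to Proposition~\ref{prop:stab}, dispatch case~(1) immediately, and for case~(2) reduce to the assertion that every chain admits a limit ordinal at which $T$ preserves the colimit. The only difference is that the paper simply cites this last assertion as the ``clever lemma'' of Koubek--Reiterman \cite[\S8.5]{Koubek1979Categorical} (see also Kelly \cite[Prop.~4.1]{Kelly1980A-unified}), whereas you outline its proof---your cofinal $\cm$-chain via closure points of the image-stabilisation function~$\sigma$ is exactly the standard argument, and your identification of the delicate step (images commuting with the directed union at a closure point) is accurate.
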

\begin{proof}
Assuming (1) the conclusion holds on combining the three preceding propositions.  Assuming (2), it suffices to show that if $A$ is any chain, then there exists a limit ordinal $m$ such that $T$ preserves the colimit of the chain $(A_{n})_{n < m}$ of length $n$.  This is the content of a clever lemma from Section 8.5 of Koubek and Reiterman \cite{Koubek1979Categorical}.  See also Proposition 4.1 of \cite{Kelly1980A-unified} for a helpful proof of that result.
\end{proof}

\black


\begin{thebibliography}{10}

\bibitem{Adamek1994Locally}
{\sc Ad{\'a}mek, J. and Rosick{\'y}, J.}
\newblock {\em Locally Presentable and Accessible Categories}, vol.~189 of {\em
  London Mathematical Society Lecture Note Series}.
\newblock Cambridge University Press, 1994.

\bibitem{Ara2013On-the}
{\sc Ara, Dimitri.}
\newblock {On the homotopy theory of Grothendieck $\omega$-groupoids.}
\newblock {\em Journal of Pure and Applied Algebra 217} (2013), 1237--1278.

\bibitem{Ara2011The-Brown}
{\sc Ara, D. and M\'etayer, F.}
\newblock {The Brown-Golasi\'nski model structure on strict $\infty$-groupoids revisited.}
\newblock {\em Homology, Homotopy and Applications 13} (2011), no. 1, 121--142.

\bibitem{Awodey2018ACubical}
{\sc Awodey, Steve.}
\newblock{A cubical model of homotopy type theory.}
\newblock{\em Annals of Pure and Applied Logic 169(12)} (2018)1270--1294.

\bibitem{Beke2000Sheafifiable}
{\sc Beke, Tibor.}
\newblock{ Sheafifiable homotopy model categories.}
\newblock{ \em Math. Proc. Cambridge Philos. Soc. 129} (2000), no. 3, 447--475.

\bibitem{Bourke2016Accessible}
{\sc Bourke, J. and Garner, R.}
\newblock {Algebraic weak factorisation systems I: Accessible awfs.}
\newblock {\em Journal of Pure and Applied Algebra 220} (2016), 108--147.

\bibitem{Bourke2016Weak}
{\sc Bourke, J. and Garner, R.}
\newblock {Algebraic weak factorisation systems 2: Categories of weak maps.}
\newblock {\em Journal of Pure and Applied Algebra 220} (2016), 148--174.

\bibitem{Diers1980Categories}
{\sc Diers, Y.}
\newblock{Cat\'egories localement multipr\'esentables.}
\newblock {\em Archiv der Mathematik 34} (1980), no. 1, 153--170.

\bibitem{Diers1980Multi}
{\sc Diers, Y.}
\newblock{Multimonads and multimonadic categories.}
\newblock {\em Journal of Pure and Applied Algebra 17} (1980), 153--170.

\bibitem{Diers1981Some}
{\sc Diers, Y.}
\newblock{Some spectra relative to functors.}
\newblock {\em Journal of Pure and Applied Algebra 22} (1981), 57--74.

\bibitem{Dugger2004Weak}
{\sc Dugger, D. and Isaksen, D.}
\newblock{Weak equivalences of simplicial presheaves.}
\newblock{In Homotopy theory: relations with algebraic geometry, group cohomology, and algebraic K-theory},
97--113, {\em Contemp. Mathematics 346}, Amer. Math. Soc., Providence, RI, 2004.

\bibitem{Gabriel1971Lokal}
{\sc Gabriel, P., and Ulmer, F.}
\newblock {\em Lokal pr{\"a}sentierbare {K}ategorien}, vol.~221 of {\em Lecture
  Notes in Mathematics}.
\newblock Springer-Verlag, 1971.

\bibitem{Gambino2017The-Frobenius}
{\sc Gambino, N. and Sattler, C.}
\newblock The Frobenius condition, right properness, and uniform fibrations.
\newblock{\em Journal of Pure and Applied Algebra 221} (2017), 3027--3068.

\bibitem{Garner2011Understanding}
{\sc Garner, R.}
\newblock Understanding the small object argument.
\newblock {\em Applied Categorical Structures 17}, 3 (2009), 247--285.

\bibitem{Goerss1999Simplicial}
{\sc Goerss, P. and Jardine, J.}
\newblock {Simplicial Homotopy Theory}, 
\newblock {\em Progress in Mathematics 174}, Birkh\"{a}user, 1999.

\bibitem{Grandis2006Natural}
{\sc Grandis, M., and Tholen, W.}
\newblock Natural weak factorization systems.
\newblock {\em Archivum Mathematicum 42}, 4 (2006), 397--408.

\bibitem{Hess2017A-necessary}
{\sc Hess, K., Kedziorek, M., Riehl, E. and Shipley, B.}
\newblock A necessary and sufficient condition for induced model structures.
\newblock {\em Journal Of Topology 10} (2017), 324--369

\bibitem{Hovey1999Model}
{\sc Hovey, M.}
\newblock {\em Model categories}, vol.~63 of {\em Mathematical Surveys and
  Monographs}.
\newblock American Mathematical Society, 1999.

\bibitem{John1977A-note}
{\sc John, R.}
\newblock{A note on implicational subcategories.}
\newblock{In Proc. Colloq. Szeged, 1975}, {\em Coll. Math. J. Bolyai 17}, North-Holland, Amsterdam, 213--222.

\bibitem{Joyal1993Pullbacks}
{\sc Joyal, A., and Street, R.}
\newblock Pullbacks equivalent to pseudopullbacks.
\newblock {\em Cahiers de Topologie et Geom{\'e}trie Diff{\'e}rentielle
  Cat{\'e}goriques 34}, 2 (1993), 153--156.
  
\bibitem{Kan1957On}
{\sc Kan, D.}
\newblock On c.s.s. complexes.
\newblock {\em American Journal of Mathematics 79}, No. 3 (1957), pp. 449--476.
  
\bibitem{Kelly1980A-unified}
{\sc Kelly, G.~M.}
\newblock A unified treatment of transfinite constructions for free algebras,
  free monoids, colimits, associated sheaves, and so on.
\newblock {\em Bulletin of the Australian Mathematical Society 22}, 1 (1980),
  1--83.
  
\bibitem{Koubek1979Categorical}
{\sc Koubek, V. and Reiterman, J.}
\newblock Categorical constructions of free algebras, colimits and completions of partial algebras.
\newblock {\em Journal of Pure and Applied Algebra 14} (1979), 195--231.
  

\bibitem{Lack2002A-quillen}
{\sc Lack, S.}
\newblock A Quillen model structure for 2-categories.
\newblock {\em K-Theory}, 26 (2002),
171--205.

\bibitem{Lack2004A-quillen}
{\sc Lack, S.}
\newblock A Quillen model structure for bicategories.
\newblock {\em K-Theory}, 33 (2004),
185--197.

\bibitem{Lack2007Homotopy-theoretic}
{\sc Lack, S.}
\newblock Homotopy-theoretic aspects of 2-monads.
\newblock {\em Journal of Homotopy and Related Structures 7}, 2 (2007),
  229--260.

\bibitem{Lack2011A-quillen}
{\sc Lack, S.}
\newblock A {Q}uillen model structure for {G}ray-categories.
\newblock {\em Journal of K-Theory 8}, 2 (2011), 183--221.

\bibitem{Lafont2010A-folk}
{\sc Lafont, Y. and M\'etayer, F. and Worytkiewicz, K.}
\newblock{A folk model structure on omega-cat.}
\newblock{\em Advances in Mathematics}, 224 (2010), no. 3, 1183--1231.

\bibitem{Riehl2011Algebraic}
{\sc Riehl, Emily.}
\newblock Algebraic model structures.
\newblock{\em New York J. Math}, 17 (2011) 173--231.

\bibitem{Makkai1989Accessible}
{\sc Makkai, M., and Par{\'e}, R.}
\newblock {\em Accessible categories: the foundations of categorical model
  theory}, vol.~104 of {\em Contemporary Mathematics}.
\newblock American Mathematical Society, 1989.

\bibitem{Nikolaus2011Algebraic}
{\sc Nikolaus, T.}
\newblock{Algebraic models for higher categories}
\newblock{\em  Indag. Math. (N.S.)} 21 (2011), no. 1--2, 52--75.


\bibitem{Quillen1967Homotopical}
{\sc Quillen, D.}
\newblock {\em Homotopical Algebra}, vol.~43 of {\em
  Lecture Notes in Mathematics}.
\newblock Springer, 1967.


\bibitem{Riehl2011Algebraic}
{\sc Riehl, E.}
\newblock Algebraic model structures.
\newblock {\em New York Journal of Mathematics 17\/} (2011), 173--231.

\bibitem{Rosicky2007On-combinatorial}
{\sc Rosick{\'y}, J.}
\newblock{On combinatorial model categories.}
\newblock{Appl. Cat. Str.} 17 (2009), 303--316.



\bibitem{Swan2018Identity}
{\sc Swan, Andrew.}
\newblock{Identity Types in Algebraic Model Structures and Cubical Sets.}
\newblock {Arxiv Preprint 2018, \url{https://arxiv.org/abs/1808.00915}.}

\bibitem{VG}
{\sc van den Berg, B. and Garner, R.}
\newblock Topological and simplicial models of identity types.
\newblock{Transactions of the ACM on Computational Logic} 13 (2012), no. 1,  3:1--3:44.
\end{thebibliography}
\end{document}